\newcommand{\Extend}[5]{\ext@arrow0099{\arrowfill@#1#2#3}{#4}{#5}}
\let\pa=\partial
\def\eps\epsilon
\def\cA{{\cal A}}
\def\C{\mathop{\bf C\kern 0pt}\nolimits}
\def\DD{\mathop{\bf D\kern 0pt}\nolimits}
\def\K{\mathop{\bf K\kern 0pt}\nolimits}
\def\N{\mathop{\bf N\kern 0pt}\nolimits}
\def\Q{\mathop{\bf Q\kern 0pt}\nolimits}
\newcommand{\beq}{\begin{equation}}
\newcommand{\eeq}{\end{equation}}
\newcommand{\ben}{\begin{eqnarray}}
\newcommand{\een}{\end{eqnarray}}
\newcommand{\beno}{\begin{eqnarray*}}
\newcommand{\eeno}{\end{eqnarray*}}
\def\R{\mathop{\mathbb R\kern 0pt}\nolimits}
\newtheorem{theorem}{Theorem}[section]
\newtheorem{proposition}[theorem]{Proposition}
\newtheorem{lemma}[theorem]{Lemma}
\newtheorem{remark}{Remark}[section]
\theoremstyle{remark}
\begin{document}

 \title[Scattering theory for focusing $2d$ NLS with potential]{ \bf Scattering theory for NLS with inverse-square potential in 2D }

 \author{Xiaofen Gao}%
\address{Department of Mathematics, Beijing Institute of Technology, Beijing,China, 100081}
\email{xiaofengao\_am@163.com}

\author{Chengbin Xu}%
\address{The Graduate School of China Academy of Engineering Physics, P. O. Box 2101, Beijing, China, 100088}
\email{xcbsph@163.com}

\maketitle

\begin{abstract}
  In this paper, we study the long time behavior of the solution of nonlinear Schr\"odinger equation with a singular potential.   We prove scattering below the ground state for the radial NLS with inverse-square potential in dimension two
  $$iu_t+\Delta u-\frac{au}{|x|^2}=- |u|^pu$$
  when $2<p<\infty$ and $a>0$. This work extends the result in \cite{KMVZ,LMM,ZZ1} to dimension 2D. The key point is a modified version of Arora-Dodson-Murphy's approach \cite{ADM}.
\end{abstract}

\begin{center}
 \begin{minipage}{12.2cm}
   { \small {{\bf Key Words:}  Nonlinear Schr\"odinger equations;  inverse-square potential;  scattering.}
      {}
   }\\
 \end{minipage}
 \end{center}

\begin{CJK*}{GBK}{song}
\setcounter{section}{0}\setcounter{equation}{0}
\section{Introduction}

\noindent

 Considering the initial value problem (IVP), also called the Cauchy problem for the nonlinear Schr\"odinger equation with inverse potential
 \begin{align}\label{INLS}
 \begin{cases}
   &i\pa_tu+\Delta u-V(x)u=\lambda|u|^{p}u,\ \ \ \  t\in\R,\ x\in\R^2\\
   &u(0,x)=u_0(x) \in  H_a^1(\R^2)
 \end{cases}
 \end{align}
where $u:\R_t\times\R_x^2\rightarrow \mathbb{C} $ and $V(x)=\frac{a}{|x|^2}$ with $a>0.$ And $\lambda\in\{1,-1\}$ with $\lambda=1$ known as defocusing case and $\lambda=-1$ as the focusing case.

We consider the operator
$$\mathcal{L}_a:=-\Delta+\frac{a}{|x|^2}, a\geq -(d-2)^2/4.$$
More precisely, we interpret $\mathcal{L}_a$ as the Friedrichs extension of this
operator defined initially on $C_c^\infty(\R^d\setminus\{0\})$.  The
restriction $a\geq -(d-2)^2/4$ ensures that the operator $\mathcal{L}_a$ is positive semi-definite, see \cite[Sect. 1.1]{KMVZZ1}.

The nonlinear Schr\"odinger equations with inverse-square potential have attracted a lot of interest in the past years (see e.g.\cite{BPST, KMVZZ, KMVZZ1, KMVZ, LMM, ZZ1, ZZ, Zheng}).
However, to the best of our knowledge, there is little result for the low dimension $d=2$. In this paper, we will prove scattering theory for case $d=2$.

When $d=2$, restricting naturally to values $a>0$, we consider $\mathcal{L}_a$ as the Friedrich extension of the quadratic form $Q$,(see \cite{Fa-2013} and \cite{FFT}), defined on $C_c^\infty(\R^2\setminus\{0\})$ via
$$Q(f):=\int_{\R^2}|\nabla f|^2+a\frac{|f|^2}{|x|^2}dx.$$
Let us define $\dot{H}_a^1(\R^2)$ as the completion of $C_c^\infty(\R^2\setminus\{0\})$ with respect to the norm
$$\|u\|_{\dot{H}_a^1(\R^2)}:=\left(\int_{\R^2}|\nabla f|^2+a\frac{|f|^2}{|x|^2}dx\right)^{\frac12}.$$



In dimension $d\geq3$, Killip-Miao-Visan-Zhang-Zheng \cite{KMVZZ1} established harmonic analysis for $\mathcal{L}_a$ and showed the Sobolev norm properties. But for $d=2$, the above properties fails.
And by Hardy inequality,  the spcace $H_a^1$ is strictly smaller than the classical Sobolev space $H^1$, hence we can not use the chain rule in \cite{KMVZZ1} which is a crucial step to show the well-posedness. Instead,  we will use Aharonov-Bohn potential \cite{LW} to define the Sobolev norm $\|\cdot\|_{{H}_A}$ and prove the norm of $H_a^1$ coincide with $H_A^1$ for radial function, thus we can use the quadratic form of the norm of $\dot{H}_A$ to
obtain the chain rule. Therefore we can prove the well posedness and scattering theory of \eqref{INLS}.

The class of solutions to \eqref{INLS} 
conserve their mass and energy, defined respectively by
$$M(u)=:\int_{\R^2}|u|^2dx=M(u_0)$$
$$E(u)=:\int_{\R^2}\frac12(|\nabla u|^2+V|u|^2)-\frac1{p+2}|u|^{p+2}dx=E(u_0)$$
Initial data belonging to $H_a^1(\R^2)$ have finite mass and energy and the following variant of the Gagliardo-Nirenberg inequality:
$$\|f\|_{L_x^{p+2}}^{p+2}\leq C_a\|f\|_{L_x^2}^{2}\|f\|_{\dot{H}_a^1}^{p}$$
where $C_a$ denotes the sharp constant in the inequality above the radial functions. The sharp constant $C_a$ is attained by a radial solution $Q_a$ (see \cite{Zheng}), to elliptic equation
$$-\mathcal{L}_a-Q_a+Q_a^{p+1}=0.$$

In this paper, we consider the mass-supercritical range $2<p<\infty$, in two spatial dimension.
 We will give simple proof of the following scattering results.

\begin{theorem}[Radial Scattering]\label{Main}
Let $2<p<\infty$, $\lambda=-1$ and $a>0$. Suppose $u_0\in H_a^1(\R^2)$ is radial and $M(u_0)^{1-s_p}E(u_0)^{s_p}<M(Q_a)^{1-s_p}E(Q_a)^{s_p}$.
  Moreover, If
  $$\|u_0\|_{L^2}^{1-s_p}\|u_0\|_{\dot{H}_a^1}^{s_p}\leq\|Q_a\|_{L^2}^{1-s_p}\|Q_a\|_{\dot{H}_a^1}^{s_p}.$$
  Then the solution to \eqref{INLS} with data $u_0$ is global and scatters.
\end{theorem}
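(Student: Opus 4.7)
The plan is to follow the Arora--Dodson--Murphy scheme of \cite{ADM}, adapted to the 2D inverse-square setting by the Aharonov--Bohm identification of $H_a^1$ with a magnetic Sobolev space on radial functions, as indicated in the introduction. The argument proceeds in three stages: variational coercivity, a truncated Morawetz estimate, and the conversion of Morawetz decay into a finite scattering norm.

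\emph{Stage 1 (variational coercivity and reduction).} Combining the sharp Gagliardo--Nirenberg inequality (whose constant $C_a$ is attained at $Q_a$) with the subthreshold hypothesis on the scale-invariant product $M^{1-s_p}E^{s_p}$, a standard continuity argument produces a fixed $\delta>0$ such that
$$\|u(t)\|_{L^2}^{1-s_p}\|u(t)\|_{\dot{H}_a^1}^{s_p}\le(1-\delta)\|Q_a\|_{L^2}^{1-s_p}\|Q_a\|_{\dot{H}_a^1}^{s_p}$$
throughout the existence interval, together with the coercivity
$$\|u(t)\|_{\dot{H}_a^1}^2-\tfrac{p}{p+2}\|u(t)\|_{L^{p+2}}^{p+2}\ge c\,\|u(t)\|_{\dot{H}_a^1}^2.$$
With energy conservation, this gives $\sup_t\|u(t)\|_{H_a^1}<\infty$ and hence global existence. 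By the Strichartz theory for $e^{-it\mathcal{L}_a}$ on radial $H_a^1$ data in 2D (available through the gauge equivalence with a magnetic Laplacian), scattering is then reduced to a spacetime bound $\|u\|_{L^q_{t,x}(\R\times\R^2)}<\infty$ for an appropriate intercritical admissible exponent $q$.

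\emph{Stage 2 (truncated Morawetz estimate).} Introduce a radial weight $\phi_R(x)=R^2\phi(|x|/R)$ with $\phi$ smooth, equal to $|x|^2/2$ on $|x|\le1$ and constant on $|x|\ge2$, and consider the Morawetz action
$$Z_R(t)=2\,\IM\int\overline{u(t,x)}\,\nabla u(t,x)\cdot\nabla\phi_R(x)\,dx,\qquad |Z_R(t)|\lesssim R.$$
A direct computation using \eqref{INLS} yields
$$\tfrac{d}{dt}Z_R(t)=8\Bigl(\|u(t)\|_{\dot{H}_a^1}^2-\tfrac{p}{p+2}\|u(t)\|_{L^{p+2}}^{p+2}\Bigr)+\mathrm{Err}_R(t),$$
where $\mathrm{Err}_R(t)$ is supported in $\{|x|\ge R\}$; the inverse-square piece inside $\{|x|\le R\}$ drops out identically since $x\cdot\nabla(a/|x|^2)=-2a/|x|^2$ cancels the virial factor there. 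The error is estimated via the 2D radial Sobolev bound $|u(x)|\lesssim|x|^{-1/2}\|u\|_{H_a^1}$, valid through the $H_a^1\simeq H_A^1$ equivalence; for $R$ large depending on $\delta$ and the conserved quantities, $|\mathrm{Err}_R(t)|\le\tfrac{c}{2}\|u(t)\|_{\dot{H}_a^1}^2+o_R(1)\|u(t)\|_{L^{p+2}}^{p+2}$. Combined with the coercivity of Stage 1 and integrated on $[0,T]$, this produces the a priori estimate $\int_0^T\|u(t)\|_{L^{p+2}}^{p+2}\,dt\lesssim R$ uniformly in $T$, and hence $\|u(t_n)\|_{L^{p+2}}\to 0$ along a sequence $t_n\to\infty$.

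\emph{Stage 3 (closure and principal obstacle).} The pointwise-along-a-sequence decay of the potential energy is fed into the Duhamel formula with Strichartz estimates and upgraded, via the standard ADM perturbation/bootstrap at large times $t_n$, to the global spacetime bound $\|u\|_{L^q_{t,x}}<\infty$ that yields scattering. The main obstacle lies entirely in Stage 2: the 2D radial Sobolev embedding delivers only $|x|^{-1/2}$ decay, compared with $|x|^{-(d-1)/2}$ in higher dimensions, and this is precisely the borderline scaling of the inverse-square potential. Closing the error bounds therefore requires a delicate choice of $R$ in terms of $\delta$, the mass and the energy, and it is exactly here that the Aharonov--Bohm reformulation outlined in the introduction becomes indispensable, since it is what restores the chain rule needed to control the nonlinear errors. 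Once this is in place the remainder of the argument is a structurally unchanged instance of the intercritical ADM scheme.
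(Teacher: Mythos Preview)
Your overall architecture (coercivity $\Rightarrow$ truncated virial $\Rightarrow$ scattering criterion) matches the paper, but there are two genuine gaps that prevent the argument from closing in two dimensions.

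\textbf{Stage 2 does not give a uniform-in-$T$ bound.} Your absorption estimate $|\mathrm{Err}_R(t)|\le\tfrac{c}{2}\|u(t)\|_{\dot H_a^1}^2+o_R(1)\|u(t)\|_{L^{p+2}}^{p+2}$ is not correct. The bilaplacian piece contributes $O(R^{-2})$ and the exterior nonlinear piece, via the 2D radial Sobolev bound, contributes $O(R^{-p/2})$; these are \emph{constants} in time, not quantities dominated pointwise by the main terms (indeed $\|u(t)\|_{L^{p+2}}^{p+2}$ may be arbitrarily small along the flow, so it cannot absorb a fixed $R^{-\alpha}$). After integrating on $[0,T]$ the virial inequality reads
\[
\int_0^T\|u(t)\|_{L^{p+2}}^{p+2}\,dt\;\lesssim\;R+\frac{T}{R^{\alpha}},\qquad \alpha=\min\{2,\tfrac{p}{2}\},
\]
which is \emph{not} bounded uniformly in $T$. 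The paper handles this by optimizing $R=T^{1/(1+\alpha)}$, obtaining only the sublinear bound $\int_0^T\|u\|_{L^{p+2}}^{p+2}\lesssim T^{\beta_0}$ with $\beta_0=\tfrac{1}{1+\alpha}<1$ (Proposition~\ref{pro1}). The conclusion ``$\|u(t_n)\|_{L^{p+2}}\to 0$ along a sequence'' that you draw from a uniform bound is therefore not available.

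\textbf{Stage 3 misses the 2D modification of the scattering criterion.} Even with potential-energy decay along a sequence, the ``standard ADM bootstrap'' you invoke does not close in $d=2$: the dispersive decay is only $|t|^{-1}$, which is not integrable, so the far-past piece $F_1$ of the Duhamel expansion cannot be made small using only smallness at a single large time. The paper's scattering criterion (Lemma~\ref{S-C}) is accordingly stronger than the Dodson--Murphy one: it requires \emph{both} the mass-escape condition $\liminf_{t\to\infty}\int_{|x|\le R}|u|^2\le\varepsilon^2$ \emph{and} the sublinear spacetime bound $\int_0^T\int|u|^{p+2}\le T^{\alpha}$ with $\alpha<1$. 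The second input is exactly what the optimized Morawetz above provides, and it is what allows the $F_1$ estimate to close via H\"older against $|t-s|^{-1}$. Your proposal omits this mechanism entirely; the obstacle is the borderline time decay of the 2D propagator, not (as you suggest) the borderline spatial scaling of the inverse-square potential, whose contribution in the virial computation actually has a favourable sign when $a>0$.
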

\begin{remark}
  In the case without potential (i.e. $a=0$), Theorem \ref{Main} result was previously established in \cite{AN,CF,G}. In these works the authors proved via the concentration compactness. Recently, Arora-Dodson-Murphy \cite{ADM} give a simple proof with radial initial data,  which avoids concentration compactness.

\end{remark}


\begin{remark}
The method here also can be use to treat the defocusing cases with $a>0$.  In the defocusing case, we can prove the scattering theory with the condition $a>\frac14$ via interaction Morawetz estimate by following \cite{ZZ1}, see Appendix.
In this sense, we extends the results in \cite{ZZ1} for $a>0$ under the radial assumption.
\end{remark}

The rest of this paper is organized as follows: In section 2, we set up some notation, recall some important theory for the $\mathcal{L}_a$.  In section 3, we establish a new scattering criterion for \eqref{INLS}, Lemma \ref{S-C}. In section 4, by the Morawetz identity, we will establish the virial/Morawetz estimates to show the solution satisfy the scattering criterion of Lemma \ref{S-C}, thereby completing the proof of Theorem \ref{Main}. In appendix, we will establish interaction Moraweta type estimate for the nonradial defocusing case, then we can obtain global solution scattering.

We conclude the introduction by giving some notations which
will be used throughout this paper. We always use $X\lesssim Y$ to denote $X\leq CY$ for some constant $C>0$.
Similarly, $X\lesssim_{u} Y$ indicates there exists a constant $C:=C(u)$ depending on $u$ such that $X\leq C(u)Y$.
We also use the big-oh notation $\mathcal{O}$. e.g. $A=\mathcal{O}(B)$ indicates $C_{1}B\leq A\leq C_{2}B$ for some constants $C_{1},C_{2}>0$.
The derivative operator $\nabla$ refers to the spatial  variable only.
We use $L^r(\mathbb{R}^2)$ to denote the Banach space of functions $f:\mathbb{R}^2\rightarrow\mathbb{C}$ whose norm
$$\|f\|_r:=\|f\|_{L^r}=\Big(\int_{\mathbb{R}^2}|f(x)|^r dx\Big)^{\frac1r}$$
is finite, with the usual modifications when $r=\infty$. For any non-negative integer $k$,
we denote by $H^{k,r}(\mathbb{R}^2)$ the Sobolev space defined as the closure of smooth compactly supported functions in the norm $\|f\|_{H^{k,r}}=\sum_{|\alpha|\leq k}\|\frac{\partial^{\alpha}f}{\partial x^{\alpha}}\|_r$, and we denote it by $H^k$ when $r=2$.
For a time slab $I$, we use $L_t^q(I;L_x^r(\mathbb{R}^2))$ to denote the space-time norm
\begin{align*}
  \|f\|_{L_{t}^qL^r_x(I\times \R^2)}=\bigg(\int_{I}\|f(t,x)\|_{L^r_x}^q dt\bigg)^\frac{1}{q}
\end{align*}
with the usual modifications when $q$ or $r$ is infinite, sometimes we use $\|f\|_{L^q(I;L^r)}$ or $\|f\|_{L^qL^r(I\times\mathbb{R}^2)}$ for short.

\section{Preliminaries}
\noindent

In this section, we first introduce the Sobolev norm associated with Aharonov-Bohn potential and show the equivalence. Next we recall the dispersive estimates and Strichartz estimates.

\subsection{The equivalent Sobolev norm}

\noindent

Define
$$\mathcal{L}_A:=\left(-i\nabla+A(x)\right)^2\ \ with\ \ A(x)=\alpha\left(-\frac{x_2}{|x|^2},\frac{x_1}{|x|^2}\right)$$
formally acts on function $f$ as
$$\mathcal{L}_A=-\Delta f+\frac{\alpha^2}{|x|^2}f-2i\alpha\left(-\frac{x_2}{|x|^2},\frac{x_1}{|x|^2}\right)\cdot\nabla f.$$
Let $\nabla_A:=\nabla+iA(x),$ define $\dot{H}_A^1$ as the completion of $C_c^\infty(\R^2\setminus\{0\})$ with respect to the norm
$$\|f\|_{\dot{H}_A^1}:=\left(\int_{\R^2}|\nabla_Af(x)|^2dx\right)^{\frac12}$$
In \cite{LW}, the quadratic form $\dot{H}_A$ can be written in a more convenient form by using the polar coordinate
$$h(\textbf{a})[u]=\int_{0}^{\infty}\int_{0}^{2\pi}(|u_r|^2+r^{-2}|u_{\theta}+i\alpha u|^2)rdrd\theta.$$
Thus, for any radial $u\in \dot{H}_A$, we have
$$h(\textbf{a})[u]=2\pi\int_{0}^{\infty}(|u_r|^2+\alpha^2\frac{|u|^2}{r^2})rdr$$
and
$$\int_{\R^2}\frac{|u|^2}{|x|^2}dx=2\pi\int_{0}^{\infty}\frac{|u|^2}{r}dr\leq \frac{1}{\alpha^2}h(\textbf{a})[u].$$
Hence, if $u$ is radial,  we obtain that
\begin{equation}
\|u\|^2_{\dot{H}_a^1(\R^2)}=\int_{\R^2}\big(|\partial_r f|^2+a\frac{|f|^2}{r^2}\big) rdr\lesssim h(\textbf{a})[u]=\|u\|^2_{\dot{H}_A^1}.
\end{equation}
On the other hand, we have
\begin{equation}
\|u\|^2_{\dot{H}_A^1}=h(\textbf{a})[u]\lesssim \int_{\R^2}\big(|\partial_r f|^2+a\frac{|f|^2}{r^2}\big) rdr\lesssim \|u\|^2_{\dot{H}_a^1(\R^2)}, \qquad a=\alpha^2.
\end{equation}
So, it follows the equivalent norm  when $\alpha=\sqrt{a}(a>0)$
\begin{equation}
\|u\|_{\dot{H}_A^1}\cong\|u\|_{\dot{H}_a^1},\qquad u~\text{is radial}.
\end{equation}
Define the inhomogenous space $H_A^1=\dot{H}_A^1\cap L^2(\R^2)$, we obtain  $H_a^1$ coincide with $H_A^1.$ Moreover, for the radial $f$ and $\alpha^2=a$, we have $\mathcal{L}_a f=\mathcal{L}_A f. $ Then we may deduce that
$$[\nabla_A, \mathcal{L}_a]f=[\nabla_A, \mathcal{L}_A]f=0,\ \ \text{for} \ f \ \text{ radial}.$$

\subsection{Dispersive and Strichartz estimates}

\noindent

We recall some Strichartz estimates associated to the linear Schr\"odinger propagator in the radial case.

We say the pair $(q,r)$ is $L^2-$adimissible or simply admissible pair if they satisfy the condition
$$\frac{2}{q}+\frac{2}{r}=1$$
where $2\leq q,r\leq \infty.$
Let $\Lambda_0=\{(q,r):(q,r) \ is\  L^2-admissible \}$. 



\begin{lemma}[Dispersive estimate \cite{Fa-2013,Fa-2015}] Let $a>0$ and $2\leq p\leq \infty$, then we have
$$\|e^{it\mathcal{L}_a}u_0\|_{L^{p}}\leq C|t|^{-2(\frac12-\frac1p)}\|u_0\|_{L^{p'}}$$
for some constant $C=C(a,p)>0$ which does not depend on $t,\ u_0.$
\end{lemma}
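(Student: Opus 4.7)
The plan is to deduce the full range $2\le p\le\infty$ by complex interpolation between the two endpoints $p=2$ and $p=\infty$. The $L^2$ endpoint is trivial: since $\mathcal{L}_a$ is the Friedrichs extension of a nonnegative symmetric operator, it is self-adjoint, and Stone's theorem gives the unitarity $\|e^{it\mathcal{L}_a}u_0\|_{L^2}=\|u_0\|_{L^2}$. Once the endpoint $p=\infty$
$$\|e^{it\mathcal{L}_a}u_0\|_{L^\infty}\lesssim |t|^{-1}\|u_0\|_{L^1}$$
is established, Riesz--Thorin applied to the analytic family $\{e^{it\mathcal{L}_a}\}_{t\neq 0}$ immediately produces the claimed decay $|t|^{-2(\frac12-\frac1p)}$ for every $p\in[2,\infty]$.

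For the endpoint bound I would exploit the spectral decomposition of $\mathcal{L}_a$ through the Aharonov--Bohm reformulation from Section~2.1. Separating variables in polar coordinates decomposes $L^2(\R^2)$ into angular modes $e^{ik\theta}$ on each of which the operator $\mathcal{L}_A$ (with $\alpha=\sqrt{a}$) acts as the Bessel operator of index $\nu_k=|k+\alpha|$. Diagonalizing each mode with the Hankel transform of order $\nu_k$ then yields an explicit integral kernel for $e^{it\mathcal{L}_a}$ of the schematic form
$$K_t(x,y)=\frac{1}{2it}\,e^{i\frac{|x|^2+|y|^2}{4t}}\sum_{k\in\Z}e^{ik(\theta_x-\theta_y)}J_{\nu_k}\!\left(\frac{|x||y|}{2t}\right),$$
where $J_\nu$ is the Bessel function of the first kind. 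The $L^1\to L^\infty$ estimate reduces to the uniform-in-$(x,y)$ pointwise bound $|K_t(x,y)|\lesssim |t|^{-1}$.

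The real work, and the main obstacle, is controlling this series uniformly in $a>0$. One splits the summation according to the ratio between the Bessel index $\nu_k$ and the argument $z=|x||y|/(2t)$: the regime $\nu_k\gg z$ is handled by the rapid decay of $J_{\nu_k}(z)$ when the index dominates the argument; the regime $\nu_k\ll z$ falls under the standard oscillatory asymptotics $|J_{\nu_k}(z)|\lesssim z^{-1/2}$; the intermediate transition zone $\nu_k\sim z$ requires the Langer/uniform asymptotic expansion in terms of Airy functions to avoid losing in $k$. Combining the three regimes — this is the technical core of Fanelli's work — produces a bound independent of $a$, and substituting into the kernel representation above finishes the proof.
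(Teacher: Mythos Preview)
The paper does not supply a proof of this lemma; it is quoted directly from the cited references \cite{Fa-2013,Fa-2015}. Your outline is precisely the method those papers employ: interpolate between the unitary $L^2$ bound and the $L^1\to L^\infty$ endpoint, the latter coming from a pointwise kernel bound obtained by separating in angular modes and diagonalizing each radial piece via the Hankel transform.

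There is, however, a genuine slip in your mode-by-mode reduction. You invoke ``the Aharonov--Bohm reformulation from Section~2.1'' and write the Bessel indices as $\nu_k=|k+\alpha|$. But the identification of $\mathcal{L}_a$ with $\mathcal{L}_A$ in Section~2.1 is valid \emph{only on radial functions}; on a nonzero angular mode the two operators differ by the first-order magnetic cross term $-2iA\cdot\nabla$, and the dispersive estimate is needed here for general (non-radial) data, since it is the input to the Strichartz bound \eqref{S1}. Decomposing $\mathcal{L}_a$ itself in polar coordinates gives, on the mode $e^{ik\theta}$, the radial operator $-\partial_r^2-\tfrac1r\partial_r+\tfrac{k^2+a}{r^2}$, so the correct index is $\nu_k=\sqrt{k^2+a}$, not $|k+\sqrt a|$. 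With this correction the three-regime analysis you describe (large index, oscillatory zone, Airy transition) goes through unchanged --- indeed $\nu_k\ge\sqrt a>0$ uniformly, which is what makes the series absolutely convergent. A final minor point: the resulting constant does depend on $a$, as stated in the lemma, so ``independent of $a$'' should read ``independent of $(x,y)$''.
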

\begin{proposition}[Strichartz estimates \cite{KT}]
Let $a>0$ and $\alpha=\sqrt{a}$. Suppose $u:I\times\R^2\rightarrow\mathcal{C}$ is a solution to $i\pa_tu-\mathcal{L}_au=F$ with initial data $u(t_0)$. Then for any $(q,r),\ (m,n)\in\Lambda_0$, we have
\begin{align}\label{S1}
  \|u\|_{L_t^qL_x^r(I\times\R^2)}\lesssim\|u(t_0)\|_{L^2}+\|F\|_{L_t^{m'}L_x^{n'}(I\times\R^2)}
\end{align}
and for radial solution
\begin{align}\label{S2}
\|\nabla_Au\|_{L_t^qL_x^r(I\times\R^2)}\lesssim\|\nabla_Au(t_0)\|_{L^2}+\|\nabla_AF\|_{L_t^{m'}L_x^{n'}(I\times\R^2)}
\end{align}
 \end{proposition}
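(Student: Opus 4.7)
\emph{The homogeneous estimate \eqref{S1}.} The plan is to invoke the abstract Keel-Tao Strichartz theorem. The two inputs it requires are an $L^2$-isometry and a dispersive decay estimate: the former is automatic from the self-adjointness and positive semi-definiteness of $\mathcal{L}_a$, which yield $\|e^{-it\mathcal{L}_a}u_0\|_{L^2}=\|u_0\|_{L^2}$, while the latter is precisely the dispersive bound established in the preceding lemma. These combine to give the homogeneous estimate $\|e^{-it\mathcal{L}_a}u_0\|_{L_t^q L_x^r} \lesssim \|u_0\|_{L^2}$ for every $(q,r)\in\Lambda_0$. The full inhomogeneous bound then follows from Duhamel's formula
$$u(t)=e^{-i(t-t_0)\mathcal{L}_a}u(t_0)-i\int_{t_0}^{t}e^{-i(t-s)\mathcal{L}_a}F(s)\,ds,$$
together with the standard $TT^{*}$ plus Christ-Kiselev argument that accommodates distinct pairs $(q,r)\neq(m,n)$ on the two sides.

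\emph{The derivative estimate \eqref{S2}.} For the second bound the radial hypothesis is essential, because it activates the commutator identity $[\nabla_A,\mathcal{L}_a]f=0$ recorded in Section 2.1. Setting $v:=\nabla_A u$ and applying $\nabla_A$ to the equation $i\partial_t u-\mathcal{L}_a u=F$, the identity $\mathcal{L}_a f=\mathcal{L}_A f$ for radial $f$ together with the vanishing commutator gives $\nabla_A\mathcal{L}_a u=\mathcal{L}_A\nabla_A u$, so that $v$ satisfies the magnetic Schr\"odinger equation
$$i\partial_t v-\mathcal{L}_A v=\nabla_A F.$$
Applying the Strichartz estimates for the Aharonov-Bohm propagator $e^{-it\mathcal{L}_A}$ componentwise to the vector-valued unknown $v$ then yields exactly \eqref{S2}. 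The relevant dispersive bounds for $\mathcal{L}_A$ are those of \cite{Fa-2013,Fa-2015}, and the Keel-Tao reduction from the previous paragraph transfers verbatim since those estimates are valid on general data.

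The main obstacle I would anticipate lies in this second step: one must justify the commutator identity on a dense class (rather than merely formally), and then confirm that propagation of the vector-valued quantity $v=\nabla_A u$ by $e^{-it\mathcal{L}_A}$ is genuinely covered by the available magnetic Strichartz theory even though $v$ is no longer radial. Once these two points are settled, the derivative estimate reduces to the same Keel-Tao argument used for \eqref{S1}, applied now to the transformed unknown $v$ and source $\nabla_A F$.
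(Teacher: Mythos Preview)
Your proposal is correct and matches the paper's approach: \eqref{S1} via Keel--Tao plus the dispersive estimate, and \eqref{S2} by commuting $\nabla_A$ through the equation using the identity from Section~2.1 and then reapplying Strichartz to $v=\nabla_A u$. The only cosmetic difference is that the paper phrases the second step as ``\eqref{S2} follows from \eqref{S1}'' (i.e., it does not separately name the magnetic Strichartz estimate), whereas you explicitly route $v$ through the $\mathcal{L}_A$ propagator; since the dispersive bounds of \cite{Fa-2013,Fa-2015} already cover $\mathcal{L}_A$, this amounts to the same argument, and your version is in fact the cleaner way to justify the commutation, given that $v$ is no longer radial.
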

\begin{proof}
  The first estimate \eqref{S1} is a direct consequence  of the argument of Keel-Tao \cite{KT} and the above dispersive estimate. By the fact $[\nabla_A,\mathcal{L}_a]f=0$ with radial $f$, we obtain \eqref{S2} from \eqref{S1}.
\end{proof}



\section{Local and global well posedness}

\noindent

As a consequence of the Strichartz estimate, we obtain the local well-posedness theory in $H^1_a(\R^2)$.

\begin{theorem}[Local well posedness]
Let $\alpha=\sqrt{a}$ and $ a>0$. Assume $u_0\in H_a^1$ is radial. For $t_0\in\R$, then there exists $T=T(\|u_0\|_{H_a^1})>0$ and a unique solution $u:(-T,T)\times \R^d\rightarrow\mathbb{C}$ to \eqref{INLS} with $u(t_0)=u_0$ satisfies
$$u,\ \nabla_A u\in C(I;L^2(\R^2))\cap L_t^q(I;L^r(\R^2)),\ \ \ I=[a,b]\subset(-T,T)$$
 In particular, if $u$ remains uniformly bounded in $H_a^1$ throughout its lifespan, then $u$ extends to a global solution.
\end{theorem}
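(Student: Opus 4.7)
The plan is a standard Picard iteration on the Duhamel formulation
$$\Phi(u)(t) = e^{-i(t-t_0)\mathcal{L}_a}u_0 - i\lambda\int_{t_0}^t e^{-i(t-s)\mathcal{L}_a}(|u|^pu)(s)\,ds,$$
carried out in a closed ball of the Strichartz-type space
$$Y_T = \bigl\{u\text{ radial} : u,\ \nabla_A u \in C(I;L^2(\R^2))\cap L^q_t L^r_x(I\times\R^2)\bigr\},\quad I=(t_0-T,t_0+T),$$
for a suitable $L^2$-admissible pair $(q,r)$. Radiality is preserved both by the linear propagator (since $\mathcal{L}_a$ is rotationally invariant) and by the nonlinearity $|u|^pu$, so every iterate remains radial, and the radial Strichartz estimate \eqref{S2} may be invoked.

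The first step is to apply \eqref{S1} to $\Phi(u)$ and \eqref{S2} to $\nabla_A\Phi(u)$, reducing the contraction argument to bounding $\|\,|u|^p u\,\|_{L^{m'}_t L^{n'}_x}$ and $\|\nabla_A(|u|^p u)\|_{L^{m'}_t L^{n'}_x}$ for a dual admissible pair $(m,n)$. The technical core is the pointwise magnetic chain rule
$$|\nabla_A(|u|^p u)| \leq (p+1)\,|u|^p\,|\nabla_A u|.$$
This is obtained via the polar decomposition $u=|u|e^{i\theta}$: a direct computation gives
$$|\nabla_A u|^2 = |\nabla|u||^2 + |u|^2|\nabla\theta+A|^2,\qquad |\nabla_A(|u|^p u)|^2 = (p+1)^2|u|^{2p}|\nabla|u||^2 + |u|^{2(p+1)}|\nabla\theta+A|^2,$$
and the inequality follows since $(p+1)^2\geq 1$. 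This is exactly where the Aharonov--Bohm framework does its work: the lack of a usable chain rule for $\mathcal{L}_a$ in two dimensions, flagged in the introduction, is bypassed by the fact that $\nabla_A$ is a genuine first-order differential operator.

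Next, Hölder in space and time gives
$$\|\,|u|^p|\nabla_A u|\,\|_{L^{m'}_t L^{n'}_x} \lesssim T^{\alpha}\,\|u\|_{L^{\infty}_tL^{\rho}_x}^{p}\,\|\nabla_A u\|_{L^q_t L^r_x},\qquad \alpha>0,$$
with exponents chosen so that the Hölder balance in space holds, a positive power of $T$ is extracted in time, and $\rho<\infty$ (in two dimensions there is no endpoint obstruction, since $p<\infty$). The diamagnetic inequality $|\nabla|u||\leq|\nabla_A u|$ together with the equivalence $\|\cdot\|_{H^1_A}\cong\|\cdot\|_{H^1_a}$ for radial functions from Section~2.1 yields the Sobolev embedding $\|u\|_{L^{\rho}_x}\lesssim\|u\|_{H^1_a}$ needed to absorb the $L^{\rho}_x$ factor back into $Y_T$. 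The identical Hölder scheme controls $\|\,|u|^p u\,\|_{L^{m'}_t L^{n'}_x}$. These inputs yield $\|\Phi(u)\|_{Y_T} \leq C\|u_0\|_{H^1_a} + CT^{\alpha}\|u\|_{Y_T}^{p+1}$ together with a matching Lipschitz estimate for $\Phi(u)-\Phi(v)$, so that for $T=T(\|u_0\|_{H^1_a})$ small enough, $\Phi$ is a contraction on the ball of radius $2C\|u_0\|_{H^1_a}$ in $Y_T$.

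The main technical novelty of the argument is the pointwise magnetic chain rule, which substitutes for the unavailable fractional chain rule for $\mathcal{L}_a$; once this is in hand, the remainder is a textbook Strichartz/Picard iteration adapted to the magnetic setting. Since the local existence time depends only on $\|u(t_0)\|_{H^1_a}$, a uniform bound on this quantity along the maximal interval of existence lets us iterate the local theory and extend the solution globally, yielding the stated blow-up alternative.
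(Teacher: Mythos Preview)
Your argument is correct and follows the same overall scheme as the paper: Picard iteration on the Duhamel formula, using the Strichartz estimates \eqref{S1}--\eqref{S2} and extracting a positive power of $T$ to close the contraction. The one substantive technical difference is how the derivative of the nonlinearity is handled. The paper decomposes $\nabla_A(|u|^pu) = \nabla(|u|^pu) + iA\,|u|^pu$, bounds the first piece by the ordinary chain rule $|\nabla(|u|^pu)|\lesssim |u|^p|\nabla u|$, and controls the second via $|A(x)|\sim |x|^{-1}$ together with the Hardy-type bound $\||x|^{-1}u\|_{L^2}\lesssim \|u\|_{\dot H^1_a}$ available for radial functions; concretely it works with the dual pair $L^{4/3}_{t,x}$ and the $L^\infty_t$-based ball. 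Your pointwise magnetic chain rule $|\nabla_A(|u|^pu)|\le (p+1)|u|^p|\nabla_A u|$ fuses these two pieces into a single inequality and avoids the separate appeal to the Hardy bound, which is a cleaner packaging of the same estimate. Either route yields the same $T=T(\|u_0\|_{H^1_a})$ dependence and the same blow-up alternative.
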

\begin{proof}
  Using the equivalent of $\dot{H}_A^1$ and $\dot{H}_a^1$ , we will prove as in \cite{ZZ}. We define solution map
  $$\Phi:u\mapsto e^{it\mathcal{L}_a}u_0-i\int_{0}^{t}e^{i(t-s)\mathcal{L}_a}|u(s)|^pu(s)ds$$
on the complete metric space
$$B:=\left\{u,\nabla_A\in L_t^\infty(I,L_x^2):\|u\|_{L_t^\infty L_x^2},\|\nabla_Au\|_{L_t^\infty L_x^2}\leq 2C\|u_0\|_{H_a^1}\right\}$$
and the metric
$$d(u,v):=\|u-v\|_{L_t^\infty L_x^2(I\times \R^2)}$$
The constant $C$ depends only on the dimension and $p$, and it reflects implicit constants in the Strichartz and Sobolev embedding inequalities. We need prove that the operator $\Phi$ is well-defined on $B$ and is a contraction map under the metric $d$ for $I$.

Throughout the proof, all spacetime norms will be on $I\times\R^2$. By Strichatz inequality and Sobolev embedding, we have
\begin{align*}
\|\Phi(u)\|_{L_t^\infty L_x^2}\leq&C\|u_0\|_{L_x^2}+C\||u|^pu\|_{L_{t,x}^{\frac43}}\\
\leq& C\|u_0\|_{L_x^2}+CT^{\frac34}(\|u\|_{L_t^\infty L_x^2}\|u\|_{L_t^\infty L_x^{4p}}^p)\\
 \leq& C\|u_0\|_{L_x^2}+CT^{\frac34}(2C\|u_0\|_{H_a^1})^p
\end{align*}
Similarly, by Strichartz, we get
\begin{align*}
\|\nabla_A\Phi(u)\|_{L_t^\infty L_x^2}\leq&C\|\nabla_A u_0\|_{L_x^2}+C\|\nabla(|u|^pu)\|_{L_{t,x}^{\frac43}}+\||x|^{-1}|u|^{p+1}\|_{L_{t,x}^{\frac43}}\\
\leq& C\|u_0\|_{L_x^2}+CT^{\frac34}(\|\nabla u\|_{L_t^\infty L_x^2}\| u\|_{L_t^\infty L_x^{4p}}^p)+CT^{\frac34}(\||x|^{-1}u\|_{L_t^\infty L_x^2}\| u\|_{L_t^\infty L_x^{4p}}^p)\\
 \leq& C\|\nabla_A u_0\|_{L_x^2}+CT^{\frac34}(2C\|u_0\|_{H_a^1})^p
\end{align*}
Taking $T$ sufficiently small such that
$$T^{\frac34}(2C\|u_0\|_{H_a^1})^p\leq \|u_0\|_{H_a^1}$$
Thus $\Phi $ maps $B$ to itself.

Finally, for $u,v\in B$, we argument as above
$$d(\Phi(u),\Phi(v))\leq 2CT(2C\|u_0\|_{H_a^1})^{p-1}d(u,v)\leq \frac12 d(u,v)$$
by taking $T$ sufficiently small such that
$$2CT(2C\|u_0\|_{H_a^1})^{p-1}\leq\frac12$$

The standard fixed point argument gives a unique solution $u$ of \eqref{INLS} on $I\times \R^2.$ We also have
$$\|u\|_{L_t^qL_x^r}+\|\nabla_Au\|_{L_t^qL_x^r}\leq 2C\|u_0\|_{H_a^1}$$
The $T$ only depends $\|u_0\|_{H_a^1},\ p$ and $C$, if $\|u(t)\|_{H_a^1}$ is uniformly bounded, then $u(t)$ is global.
\end{proof}
\begin{lemma}\cite{Zheng}
  Fix $a>0$ and define
  $$C_a:=\sup\{\|f\|_{L^{p+2}}^{p+2}\div[\|f\|_{L_x^2}^{2}\|f\|_{\dot{H}_a^1}^{p}]:f\in H_a^1\setminus\{0\},\ f\  \text{radial}\}$$
  Then $C_a\in(0,\infty)$ and the Gagliardo-Nirenberg inequality for radial functions
  $$\|f\|_{L^{p+2}}^{p+2}\leq C_a\|f\|_{L_x^2}^{2}\|f\|_{\dot{H}_a^1}^{p}$$
  is attained by a function $Q_a\in H_a^1,$ which is non-zero, non-negative, radial solution to elliptic problem
  $$-\mathcal{L}_aQ_a-Q_a+Q_a^{p+1}=0.$$
\end{lemma}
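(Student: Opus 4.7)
The plan is to establish finiteness and positivity of $C_a$ by routine embedding, then produce the extremizer by the direct method, exploiting radial symmetry to recover compactness on the non-compact domain $\mathbb{R}^2$. Finiteness is immediate: since $a>0$ one has $\|\nabla f\|_{L^2}\leq\|f\|_{\dot{H}_a^1}$, and the classical two-dimensional Gagliardo--Nirenberg inequality $\|f\|_{L^{p+2}}^{p+2}\leq C\|f\|_{L^2}^{2}\|\nabla f\|_{L^2}^{p}$ yields $C_a\leq C<\infty$. Positivity follows by evaluating the ratio on any nontrivial radial bump in $C_c^\infty(\mathbb{R}^2\setminus\{0\})$.

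For existence of an extremizer I would use the direct method. The quotient $J(f):=\|f\|_{L^{p+2}}^{p+2}/\bigl(\|f\|_{L^2}^{2}\|f\|_{\dot{H}_a^1}^{p}\bigr)$ is invariant under the two-parameter rescaling $f(x)\mapsto\lambda f(\mu x)$, so a radial maximizing sequence $\{f_n\}$ may be normalized so that $\|f_n\|_{L^2}=\|f_n\|_{\dot{H}_a^1}=1$, forcing $\|f_n\|_{L^{p+2}}^{p+2}\to C_a$. Since $\|\nabla f_n\|_{L^2}\leq 1$ and $\|f_n\|_{L^2}=1$, the sequence is bounded in $H^1(\mathbb{R}^2)$; extracting a subsequence, $f_n\rightharpoonup f_\infty$ weakly in $H^1$ and in $\dot{H}_a^1$. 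The crucial ingredient is the Strauss radial-compactness lemma: the embedding $H^1_{\mathrm{rad}}(\mathbb{R}^2)\hookrightarrow L^q(\mathbb{R}^2)$ is compact for every $q\in(2,\infty)$. Hence $f_n\to f_\infty$ strongly in $L^{p+2}$, so $\|f_\infty\|_{L^{p+2}}^{p+2}=C_a>0$ and in particular $f_\infty\not\equiv 0$. Weak lower semicontinuity of the $L^2$ and $\dot{H}_a^1$ norms gives $\|f_\infty\|_{L^2},\|f_\infty\|_{\dot{H}_a^1}\leq 1$, and the definition of $C_a$ then forces equality in both, identifying $f_\infty$ as an extremizer. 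Replacing $f_\infty$ by $|f_\infty|$ preserves all three quantities --- the a.e.\ identity $\bigl|\nabla|f|\bigr|=|\nabla f|$ handles $\dot{H}_a^1$ --- yielding a non-negative radial maximizer.

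Finally, Euler--Lagrange at $f_\infty$ produces the identity
\[
(p+2)\,f_\infty^{p+1} = C_a\bigl[\,2\,f_\infty + p\,\mathcal{L}_a f_\infty\bigr],
\]
equivalently $\mathcal{L}_a f_\infty + \tfrac{2}{p} f_\infty = \tfrac{p+2}{pC_a}\,f_\infty^{p+1}$, with both coefficients positive. The rescaling $Q_a(x):=c\,f_\infty(\gamma x)$ with $\gamma^{2}=p/2$ and $c^{p}=(p+2)/(2C_a)$, combined with the homogeneity $\mathcal{L}_a\bigl(f(\gamma\,\cdot)\bigr)(x)=\gamma^{2}(\mathcal{L}_a f)(\gamma x)$, converts this into $-\mathcal{L}_a Q_a - Q_a + Q_a^{p+1}=0$. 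The main obstacle throughout is the $L^{p+2}$ strong convergence: in two dimensions $H^1\not\hookrightarrow L^\infty$, and weak $H^1$ convergence alone does not preclude concentration or escape to infinity. Radial symmetry, via Strauss's decay estimate, is precisely what restores the compactness needed to close the variational argument; absent the radial hypothesis one would need a full concentration-compactness profile decomposition.
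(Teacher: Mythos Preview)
The paper does not prove this lemma; it merely cites \cite{Zheng} and uses the result as a black box. Your proposal, by contrast, supplies a complete and correct argument via the direct method: scaling normalization, Strauss radial compactness in $L^{p+2}(\mathbb{R}^2)$, weak lower semicontinuity, and the standard Euler--Lagrange/rescaling computation (your choices $\gamma^2=p/2$, $c^p=(p+2)/(2C_a)$ check out, using the scaling homogeneity of the inverse-square potential). One small remark: the identity $\bigl|\nabla|f|\bigr|=|\nabla f|$ holds a.e.\ only for real-valued $f$; for complex $f$ one has merely $\bigl|\nabla|f|\bigr|\leq|\nabla f|$ (Kato's inequality), but this inequality is in the favorable direction and still shows $|f_\infty|$ is a maximizer --- or, more simply, take the maximizing sequence non-negative from the outset. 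Otherwise the argument is sound and is essentially the approach one would expect in \cite{Zheng}.
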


\begin{lemma}[Coercivity, \cite{Zheng}]\label{V-C}
 Fix $a>0$. Let $u: I\times\R^2\rightarrow\mathbb{C}$ be the maximal-lifespan solution to \eqref{INLS} with $u_0\in H_a^1$. Assume that
$$M(u_0)^{1-s_p}E(u_0)^{s_p}\leq (1-\delta)M(Q_a)^{1-s_p}E(Q_a)^{s_p}$$
Then there exist $\delta'>0,\ 1>c>0$ such that: If $$\|u_0\|_{L^2}^{1-s_p}\|u_0\|_{\dot{H}_a^1}^{s_p}\leq\|Q_a\|_{L^2}^{1-s_p}\|Q_a\|_{\dot{H}^1}^{s_p}$$
then for all $t\in I$.
\begin{flalign*}
&(i)\|u(t)\|_{L^2}^{1-s_p}\|u(t)\|_{\dot{H}_a^1}^{s_p}\leq(1-\delta')\|Q_a\|_{L^2}^{1-s_p}
\|Q_a\|_{\dot{H}^1}^{s_p}&
\end{flalign*}
(ii)$\|u(t)\|_{\dot{H}_a^1}^2-\frac{p}{p+2}\|u\|_{L^{p+2}}^{p+2}\geq c\|u(t)\|_{\dot{H}_a^1}^2$

\end{lemma}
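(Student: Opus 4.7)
The plan is the standard variational/continuity argument: combine the sharp Gagliardo--Nirenberg inequality, Pohozaev-type identities for the ground state $Q_a$, conservation of mass and energy, and a bootstrap along the time interval $I$.

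Setting $\eta(t) := \frac{\|u(t)\|_{L^2}^{1-s_p}\|u(t)\|_{\dot{H}_a^1}^{s_p}}{\|Q_a\|_{L^2}^{1-s_p}\|Q_a\|_{\dot{H}_a^1}^{s_p}}$, I would first derive the identities for $Q_a$. Testing the elliptic equation $-\mathcal{L}_a Q_a - Q_a + Q_a^{p+1}=0$ against $Q_a$ gives $\|Q_a\|_{\dot{H}_a^1}^2 + \|Q_a\|_{L^2}^2 = \|Q_a\|_{L^{p+2}}^{p+2}$. Since $Q_a$ optimises the radial Gagliardo--Nirenberg functional and $a|x|^{-2}$ scales exactly like $-\Delta$, the stationarity of the Gagliardo--Nirenberg quotient along the dilations $f\mapsto \lambda^{2/p} f(\lambda\cdot)$ yields the Pohozaev relation $\|Q_a\|_{\dot{H}_a^1}^2 : \|Q_a\|_{L^2}^2 : \|Q_a\|_{L^{p+2}}^{p+2} = p : 2 : (p+2)$. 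Consequently $C_a$ is expressible in terms of $\|Q_a\|_{L^2}$ and $\|Q_a\|_{\dot{H}_a^1}$, and $M(Q_a)^{1-s_p}E(Q_a)^{s_p}$ is a positive multiple of $\bigl(\|Q_a\|_{L^2}^{1-s_p}\|Q_a\|_{\dot{H}_a^1}^{s_p}\bigr)^{2}$.

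Next I would insert the sharp radial Gagliardo--Nirenberg inequality into the energy,
\begin{equation*}
E(u(t)) \geq \tfrac12 \|u(t)\|_{\dot{H}_a^1}^2 - \tfrac{C_a}{p+2}\|u(t)\|_{L^2}^2 \|u(t)\|_{\dot{H}_a^1}^p,
\end{equation*}
multiply by $M(u(t))^{1-s_p}$ using mass conservation, and rescale by the ground-state quantities from Step 1. This produces
\begin{equation*}
M(u_0)^{1-s_p}E(u_0)^{s_p} \geq F(\eta(t))\, M(Q_a)^{1-s_p}E(Q_a)^{s_p},
\end{equation*}
where $F$ satisfies $F(0)=0$, $F(1)=1$, is strictly increasing on $[0,1]$ and strictly decreasing on $[1,\infty)$. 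The hypothesis $M(u_0)^{1-s_p}E(u_0)^{s_p} \leq (1-\delta) M(Q_a)^{1-s_p}E(Q_a)^{s_p}$ then confines $\eta(t)$ to $[0,1-\delta'] \cup [1+\delta'',\infty)$ for some $\delta',\delta''>0$ depending on $\delta$. Since $t\mapsto\eta(t)$ is continuous on $I$ by the $H_a^1$-continuity of the flow and $\eta(0)\leq 1$ by assumption, an intermediate value argument forces $\eta(t) \leq 1-\delta'$ throughout $I$, which is conclusion (i). Conclusion (ii) then follows by reinserting (i) into the sharp Gagliardo--Nirenberg inequality and using the Step 1 formula for $C_a$: one obtains $\tfrac{p}{p+2}\|u(t)\|_{L^{p+2}}^{p+2}\leq \eta(t)^{p/s_p}\|u(t)\|_{\dot{H}_a^1}^2 \leq (1-\delta')^{p/s_p}\|u(t)\|_{\dot{H}_a^1}^2$, so (ii) holds with $c = 1 - (1-\delta')^{p/s_p}>0$.

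The only genuinely delicate point is Step 1, the Pohozaev computation, since the $L^p$ harmonic analysis for $\mathcal{L}_a$ developed in \cite{KMVZZ1} is unavailable in dimension two. The substitute is the equivalent quadratic form $h(\mathbf{a})[\cdot]$ introduced in Section 2, together with the exact scale-invariance of $a|x|^{-2}$, which lets the virial/dilation computation proceed as in the potential-free case once one restricts to radial functions.
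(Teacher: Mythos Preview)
The paper does not supply its own proof of this lemma; it simply quotes the result from \cite{Zheng}. Your argument is precisely the standard variational/continuity proof that appears in \cite{Zheng} (and, with the obvious modifications, in \cite{KMVZ,LMM}): Pohozaev identities for $Q_a$, the sharp Gagliardo--Nirenberg inequality fed into the conserved energy, and a connectedness argument on the trajectory $t\mapsto\eta(t)$. So there is no divergence of approach to discuss.

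One small correction: in your final step the exponent on $\eta$ is $p$, not $p/s_p$. Indeed, since $1-s_p=2/p$ one has $\bigl(\|u\|_{L^2}^{1-s_p}\|u\|_{\dot H_a^1}^{s_p}\bigr)^{p}=\|u\|_{L^2}^{2}\|u\|_{\dot H_a^1}^{p-2}$, and combining the explicit formula $C_a=\|Q_a\|_{L^{p+2}}^{p+2}/\bigl(\|Q_a\|_{L^2}^2\|Q_a\|_{\dot H_a^1}^p\bigr)$ with your ratio $p:2:(p+2)$ gives
\[
\tfrac{p}{p+2}\|u(t)\|_{L^{p+2}}^{p+2}\leq \tfrac{p\,C_a}{p+2}\|u(t)\|_{L^2}^{2}\|u(t)\|_{\dot H_a^1}^{p-2}\cdot\|u(t)\|_{\dot H_a^1}^{2}=\eta(t)^{p}\,\|u(t)\|_{\dot H_a^1}^{2}.
\]
Thus $c=1-(1-\delta')^{p}$. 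This slip is cosmetic and does not affect the validity of your proof.
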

\begin{remark}
  By the coercivity and conserving mass, we may get $\|u\|_{H_a^1}$ uniformly bounded. Together with local well posedness,  the solution $u$ is global.
\end{remark}

\section{Proof of Theorem \ref{Main}}
\noindent

In this section, we turn to prove Theorem \ref{Main}. Let $u_0$ satisfies the hypotheses  of Theorem \ref{Main}, and let $u(t)$ be corresponding global-in-time solution to \eqref{INLS}. In particular, $u$ is uniformly bounded in $H_a^1$ and obeys the condition (i) of Lemma \ref{V-C} .

\subsection{Scattering Criterion}

\noindent

To show Theorem \ref{Main}, we first establish a scattering criterion by following the argument.
\begin{lemma}[Scattering Criterion]\label{S-C}
Suppose $u:\R_t\times\R^2\rightarrow \mathbb{C}$ is a radial solution to \eqref{INLS} such that
  \begin{align}\label{zz0}
    \|u\|_{L_t^\infty H_x^1(\R\times\R^2)}\leq E.
  \end{align}
There exist $\epsilon=\epsilon(E)>0$ and $R=R(E)>0$ such that
  \begin{align}\label{zz1}
   \liminf_{t\to \infty}\int_{|x|\leq R}|u(t,x)|^{2}dx\leq \epsilon ^{2},
  \end{align}
  and $u(t)$ satisfies

  \begin{align}\label{zz2}
  \int_{0}^{T}\int_{\R^2}|u(t,x)|^{p+2}dx\leq T^{\alpha},
  \end{align}
 where $0<\alpha<1$.
Then $u$ scatters forward in time.

\end{lemma}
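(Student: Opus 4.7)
The plan is to adapt the scattering criterion of Arora--Dodson--Murphy \cite{ADM} to the 2D inverse-square setting. By the Strichartz estimates and a standard small-data continuity bootstrap, scattering on $[0,\infty)$ reduces to showing that for some sufficiently large $T_0$, the free evolution $e^{i(t-T_0)\mathcal{L}_a}u(T_0)$ has arbitrarily small Strichartz norm on $[T_0,\infty)$ in a critical pair $(q,r)$ where the nonlinear iteration closes; combined with the local theory on $[0,T_0]$, this yields scattering on all of $\R$.

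First I pigeonhole using (zz2). Since $\int_0^{T_0}\|u(t)\|_{L^{p+2}}^{p+2}\,dt\leq T_0^{\alpha}$ with $\alpha<1$, partitioning $[0,T_0]$ into $\lfloor T_0^{1-\beta}\rfloor$ subintervals of length $T_0^{\beta}$ with $\alpha<\beta<1$ yields, by the pigeonhole principle, a subinterval $I=[T_0-T,T_0]$ on which
\begin{equation*}
\|u\|_{L^{p+2}_{t,x}(I\times\R^2)}^{p+2}\lesssim T_0^{\alpha-(1-\beta)}\longrightarrow 0,\qquad T=T(T_0)\to\infty.
\end{equation*}
Using (zz1), I may further arrange that the same $T_0$ satisfies $\|u(T_0)\|_{L^2(|x|\leq R)}\leq 2\epsilon$. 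For $t\geq T_0$, backward Duhamel on $I$ combined with the semigroup property gives
\begin{equation*}
e^{i(t-T_0)\mathcal{L}_a}u(T_0)=e^{i(t-T_0+T)\mathcal{L}_a}u(T_0-T)-i\int_{T_0-T}^{T_0}e^{i(t-s)\mathcal{L}_a}|u(s)|^{p}u(s)\,ds.
\end{equation*}
The Duhamel integral has small Strichartz norm on $[T_0,\infty)$: by inhomogeneous Strichartz, H\"older, and Sobolev embedding, it is bounded by $\|u\|_{L^{p+2}_{t,x}(I)}^{p+1}$ times a power of $\|u\|_{L^\infty_t H^1_x}\leq E$, which vanishes as $T_0\to\infty$.

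It remains to control $e^{i(t-T_0+T)\mathcal{L}_a}u(T_0-T)$ for $t\geq T_0$. Evaluating the identity above at $t=T_0$ shows that $e^{iT\mathcal{L}_a}u(T_0-T)$ coincides with $u(T_0)$ up to a term of small $L^2$ norm, so it has small $L^2$ mass in $\{|x|\leq R\}$. Splitting $u(T_0)=\chi u(T_0)+(1-\chi)u(T_0)$ via a smooth radial cutoff $\chi$ to $|x|\leq 2R$, the near piece has $L^2$-mass $\lesssim\epsilon$, so its free evolution has small Strichartz norm by the $L^2$-Strichartz estimate \eqref{S1}; the far piece is controlled using the radial Strauss embedding $|u(x)|\lesssim |x|^{-1/2}\|u\|_{H^1}$ valid for $|x|\geq 1$, together with the dispersive estimate and interpolation against the uniform $L^\infty_tH^1_x$ bound, to gain a factor $R^{-\sigma}$ for some $\sigma>0$.

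The principal technical obstacle lies in this last step: $d=2$ is borderline for the $L^1\to L^\infty$ dispersive decay, since a radial $H^1(\R^2)$ function just fails to lie in $L^1$ (the pointwise decay $r^{-1/2}$ against the radial measure $r\,dr$ is not integrable at infinity), so the $L^1\to L^\infty$ estimate cannot be applied directly. One must therefore avoid any $L^1$ input and instead interpolate between conservation of mass (bounded but not small) and the Strauss pointwise smallness on $|x|>R$, exploiting the time integrability of admissible Strichartz estimates to extract a genuine power of $R$; choosing $R=R(E)$ sufficiently large then furnishes the required smallness. The remainder of the argument is bookkeeping of Strichartz pairs matched to the critical scattering space.
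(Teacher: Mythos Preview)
Your decomposition has a genuine gap in the treatment of the linear piece. After the backward Duhamel on $I=[T_0-T,T_0]$ you must control $e^{i(t-T_0+T)\mathcal{L}_a}u(T_0-T)$ in the scattering norm on $[T_0,\infty)$; but your move of evaluating the identity at $t=T_0$ and writing $e^{iT\mathcal{L}_a}u(T_0-T)=u(T_0)+\text{(small)}$ simply returns you to the quantity $e^{i(t-T_0)\mathcal{L}_a}u(T_0)$ you started with, so nothing has been gained. The fallback---splitting $u(T_0)=\chi u(T_0)+(1-\chi)u(T_0)$ and treating the far piece by Strauss plus dispersive/Strichartz---does not work either: the only smallness available on $(1-\chi)u(T_0)$ is in $L^q$ for $q>2$, while every homogeneous Strichartz bound for $e^{it\mathcal{L}_a}$ is tied to $L^2$ (or $\dot H^s$), which is $O(E)$ and not small. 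There is no interpolation that converts $L^\infty$ (or $L^q$, $q>2$) smallness of the data into smallness of the free evolution in an admissible space, so the promised factor $R^{-\sigma}$ cannot be produced. The obstacle you correctly flag is therefore not actually overcome by the argument you sketch.

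The paper resolves this differently: it runs the backward Duhamel all the way to $t=0$, so that the only genuinely ``linear'' piece is $e^{it\mathcal{L}_a}u_0$, whose Strichartz tail on $[T_0,\infty)$ is small simply because the global Strichartz norm is finite. The remaining Duhamel is split at the time $T_0-\epsilon^{-\theta}$. On the near interval $I_2$ one uses exactly your idea ((\ref{zz1}) plus radial Sobolev) to make the nonlinear input small. On the far interval $I_1$ the crucial point is to apply the $L^1\to L^\infty$ dispersive estimate to the \emph{nonlinearity} $|u|^pu$---which \emph{is} in $L^1_x$ since $\||u|^pu\|_{L^1}\le\|u\|_{L^{p+2}}^{p}\|u\|_{L^{(p+2)/2}}$---rather than to $u$ itself. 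This produces $\int_{I_1}|t-s|^{-1}\|u(s)\|_{L^{p+2}}^{p}\,ds$, and H\"older in $s$ against the time separation $|t-s|\ge\epsilon^{-\theta}$ together with (\ref{zz2}) gives the bound $(T_0^{\alpha}\epsilon^{\theta})^{p/(p+2)}$; coupling $\epsilon$ to a negative power of $T_0$ (with $\alpha<1$) makes this small. Your pigeonhole use of (\ref{zz2}) is thus replaced by a direct H\"older estimate in which the sub-linear growth $T_0^{\alpha}$ is beaten by the time-separation gain---this is the missing mechanism in your outline.
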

\begin{proof}
   By standard continuity argument, Sobolev embedding and Strichartz, it suffices to show
  $\|u\|_{L_{t}^{\frac{4p}{3}}L_x^{4p}(\R\times\R^2)}<\infty$.

  By Duhamel formula and continuity argument, we need to prove
  $$\|e^{i(t-T_0)\Delta}u(T_0)\|_{L_{t}^{\frac{4p}{3}}L_x^{4p}([T_0,\infty)\times\R^2)}\ll 1$$
Noting that
\begin{align*}
  e^{i(t-T_{0})\Delta}u(T_{0})=e^{it\Delta}u_0-iF_{1}(t)-iF_{2}(t),
\end{align*}
where
\begin{align*}
  F_{j}(t):=\int_{I_{j}}e^{i(t-s)\Delta}(|x|^{-b}|u|^{p}u)(s)ds, ~j=1,2
\end{align*}
and $I_{1}=[0,T_{0}-\epsilon ^{-\theta}],\quad I_{2}=[T_{0}-\epsilon ^{-\theta},T_{0}].$

Let $T_0$ be large enough, we have
$$
  \|e^{it\Delta}u_0\|_{L_{t}^{\frac{4p}{3}}L_x^{4p}([T_0,\infty)\times\R^2)} \ll1.
$$
 It remains to show
\begin{align*}
  \|F_{j}(t)\|_{L_{t}^{\frac{4p}{3}}L_x^{4p}([T_0,\infty)\times\R^2)} \ll 1,\quad \text{~ for ~} j=1,2.
\end{align*}
\textbf{Estimation  of $F_{1}(t)$}: We may use the dispersive estimate, H\"older's inequality, thus
\begin{align*}
\left\|\int_{0}^{T_0-\epsilon^{-\theta}}e^{i(t-s)\Delta}|u|^puds\right\|_{L_x^\infty}\lesssim&
\int_{0}^{T_0-\epsilon^{-\theta}}|t-s|^{-1}\||u|^pu\|_{L_x^1}ds\\
\lesssim & \int_{0}^{T_0-\epsilon^{-\theta}}|t-s|^{-1}\left(\|u\|_{L_x^{p+2}}^{p}\|u\|_{L_x^{\frac{p+2}{2}}}\right)ds\\
\lesssim & \int_{0}^{T_0-\epsilon^{-\theta}}|t-s|^{-1}\|u\|_{L_x^{p+2}}^{p}ds\\
\lesssim & (T_0^{\alpha}\epsilon^{\theta})^{\frac{p}{p+2}}
\end{align*}
yielding
$$\left\|\int_{0}^{T_0-\epsilon^{-\theta}}e^{i(t-s)\Delta}|u|^puds\right\|_{L_{t,x}^\infty
(T_0,\infty)\times\R^2}\lesssim (T_0^{\alpha}\epsilon^{\theta})^{\frac{p}{p+2}}$$
On the other hand, we may
$$F_1(t)=e^{i(t-T_0+\epsilon^{-\theta})\Delta}u(T_0-\epsilon^{-\theta})-e^{it\Delta}u_0.$$
By Strichartz and $(\frac83,8)\in\Lambda_0$ we have
$$\|F_1(t)\|_{L_t^{\frac83}L_x^{8}}\lesssim 1,$$
Thus, by interpolation, we get
$$\|F_1(t)\|_{L_{t}^{\frac{4p}{3}}L_x^{4p}([T_0,\infty)\times\R^2)}\lesssim (T_0^{\alpha}\epsilon^{\theta})^{\frac{p-2}{p+2}}$$
\textbf{Estimation  of $F_{2}(t)$}: By Strichartz, Sobolev embedding and radial sobolev embedding, we get
\begin{align*}
 \|F_2(t)\|_{L_{t}^{\frac{4p}{3}}L_x^{4p}([T_0,\infty)\times\R^2)}\lesssim&\||u|^pu\|_{L_{t,x}^{\frac43}
 ([T_0-\epsilon^{-\theta}]\times\R^2)}+\|\nabla_A(|u|^pu)\|_{L_{t,x}^{\frac43}
 ([T_0-\epsilon^{-\theta}]\times\R^2)}\\
 \lesssim& \|u\|_{L_t^\infty L_x^2}\|u\|_{L_t^{\frac{4p}{3}}L_x^{4p}}^p+ \|\nabla u\|_{L_t^\infty L_x^2}\|u\|_{L_t^{\frac{4p}{3}}L_x^{4p}}^p+\||x|^{-1}u\|_{L_t^\infty L_x^2}\|u\|_{L_t^{\frac{4p}{3}}L_x^{4p}}^p\\
 \lesssim&\epsilon^{\frac{-3\theta}{4}}\|u\|_{L_t^\infty H_a^1}\|u\|_{L_t^\infty L_x^{4p}([T_0-\epsilon^{-\theta}]\times\R^2)}^p
\end{align*}
Let $T_0$ be large enough. By the assumption and identity $\pa_t|u|^2=-2\nabla\cdot Im(\bar{u}\nabla u)$, together with integration by parts and Cauchy-Schwartz, we duce
$$\left|\pa_t\int_{I_2}\chi_R|u|^2ds\right|\lesssim \frac{1}{R}$$
Thus, choosing $R\gg \epsilon^{-(2+\theta)}$, we find
$$\|\chi_Ru\|_{L_t^\infty L_x^2(I_2\times \R^2)}\lesssim \epsilon$$
Using radial Sobolev inequality and choosing $R$ large enough, we deduce
$$\|u\|_{L_t^\infty L_x^2(I_2\times\R^2)}\lesssim \epsilon$$
By interpolation, thus
\begin{align*}
\|u\|_{L_t^\infty L_x^{4p}(I_2\times\R^2)}\leq& \|u\|_{L_t^\infty L_x^2(I_2\times\R^2)}^{\frac{1}{4p}}\|u\|_{L_t^\infty L_x^{8p-2}(I_2\times\R^2)}^{\frac{4p-1}{4p}}\\
\leq&\epsilon^{\frac{1}{4p}}
\end{align*}
Then we may have
$$\|F_2(t)\|_{L_{t}^{\frac{4p}{3}}L_x^{4p}([T_0,\infty)\times\R^2)}\lesssim \epsilon^{\frac14-\frac{3\theta}{4}}$$

Choosing $\theta=\frac16, \epsilon=T^{-6(1+\gamma)}$, where $\gamma+\alpha<1$. Thus, we get
 $$\|e^{i(t-T_0)\Delta}u(T_0)\|_{L_t^{\frac{4p}{3}}L_x^{4p}([T_0,\infty)\times\R^2)}\ll 1$$
\end{proof}

\subsection{Virial/Morawetz identities}

\noindent

In this part, we recall the following general identity, which follows by computing directly using \eqref{INLS}.

\begin{lemma}\label{Mor-Iden}
  Let $u$ be the solution of \eqref{INLS} and $w(x)$ be a smooth function. We denote the Morawetz action $M_w(t)$ by
  $$M_w(t)~=~2\int_{\R^2} \nabla w(x)Im(\bar{u}\nabla u)(x)dx.$$
  Then we have
  \begin{align}
    \frac{d}{dt}M_w(t)~=~&-\int\Delta^2a(x)|u|^2dx+4\int \pa_{jk}a(x)Re(\pa_ku\pa_j\bar{u})dx\\
    &+4\int|u|^2\frac{ax}{|x|^4}\nabla wdx-\frac{2p}{p+2}\int|u|^{p+2}\Delta w dx.
  \end{align}
\end{lemma}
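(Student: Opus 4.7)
The plan is to differentiate $M_w(t)$ under the integral sign, substitute equation \eqref{INLS} for $\partial_t u$ and $\partial_t\bar u$, and then reduce to three independent integration-by-parts computations that handle the kinetic, potential, and nonlinear contributions separately. Starting from
\[
\frac{d}{dt}M_w(t)=2\int_{\R^2}\nabla w\cdot\mathrm{Im}\bigl(\partial_t\bar u\,\nabla u+\bar u\,\nabla\partial_t u\bigr)\,dx
\]
and inserting $\partial_t u=i(\Delta u-Vu+|u|^pu)$ (with $\lambda=-1$ and $V=a/|x|^2$), the imaginary parts turn into real parts and the integrand splits into the three advertised pieces.

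For the kinetic contribution, the integrand reduces to $2\nabla w\cdot\mathrm{Re}(\bar u\,\nabla\Delta u-\Delta\bar u\,\nabla u)$, and the standard free-Schr\"odinger Morawetz computation (two rounds of integration by parts, moving one derivative off $\Delta u$ onto $\bar u$ and then peeling off another onto $w$) produces exactly $-\int \Delta^2 w\,|u|^2\,dx+4\int \partial_{jk}w\,\mathrm{Re}(\partial_ju\,\partial_k\bar u)\,dx$. For the potential contribution I would use the pointwise identity
\[
V\bar u\,\nabla u-\bar u\,\nabla(Vu)=-|u|^2\nabla V,
\]
which turns the potential piece into $-2\int|u|^2\,\nabla V\cdot\nabla w\,dx$; substituting $\nabla V=-2ax/|x|^4$ then yields $4\int|u|^2\,\dfrac{a\,x\cdot\nabla w}{|x|^4}\,dx$, exactly the third term in the statement.

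For the nonlinear contribution, the same algebraic trick gives $|u|^p\bar u\,\nabla u-\bar u\,\nabla(|u|^pu)=-|u|^2\nabla(|u|^p)$, and combining with the Leibniz identity $\nabla(|u|^{p+2})=(p+2)|u|^p\,\mathrm{Re}(\bar u\,\nabla u)$ rewrites the nonlinear piece as $\dfrac{2p}{p+2}\int\nabla w\cdot\nabla(|u|^{p+2})\,dx$; one further integration by parts produces $-\dfrac{2p}{p+2}\int|u|^{p+2}\,\Delta w\,dx$. Summing the three contributions gives the claimed identity.

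The main obstacle is not any individual step of the calculation but rather justifying the manipulations near the singularity $x=0$: none of the identities above require derivatives falling on $V$ in a dangerous way, and for radial $H^1_a$-solutions the integrals $\int V|u|^2$ and $\int|\nabla u|^2$ are finite by the quadratic-form bound and the norm equivalence $\|u\|_{H^1_a}\cong\|u\|_{H^1_A}$ from Section~2. Thus the computation may first be carried out on smooth compactly supported approximants of $u$ (which are dense in $H^1_a$ by definition) and then passed to the limit. The sign of the nonlinear term reflects the focusing convention $\lambda=-1$; for the defocusing case this sign would flip.
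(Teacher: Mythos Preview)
Your proposal is correct and matches what the paper intends: the paper does not give any detailed argument for this lemma, stating only that it ``follows by computing directly using \eqref{INLS}.'' Your sketch fills in exactly that direct computation --- differentiate, substitute the equation, split into kinetic, potential, and nonlinear pieces, integrate by parts --- and arrives at the right identity (you have also correctly read the ``$a(x)$'' in the displayed formula as the weight $w(x)$, which is a typo in the paper). One cosmetic point: because the focusing nonlinearity enters with the opposite sign to $V$ in the equation, the combination that actually appears after substitution is $-|u|^p\bar u\,\nabla u+\bar u\,\nabla(|u|^p u)=+|u|^2\nabla(|u|^p)$, i.e.\ the sign-flipped version of the identity you wrote; your final expression $\tfrac{2p}{p+2}\int\nabla w\cdot\nabla(|u|^{p+2})\,dx$ is nonetheless correct, so this is only a presentational slip, not a gap.
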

Let $R\gg 1$ to be chosen later. We take $w(x)$ to be a radial function satisfying

\begin{eqnarray}
w(x)=
\begin{cases}
|x|^2;& |x|\leq R\\
3R|x|;& |x|>2R,
\end{cases}
\end{eqnarray}
and when $R<|x|\leq 2R$, there holds
\begin{align*}
  \partial_{r}w\geq 0,\ \partial_{rr}w\geq 0\quad and \quad |\partial^{\alpha}w| \lesssim R|x|^{-|\alpha|+1}.
\end{align*}
Here $\partial_{r}$ denotes the radial derivative. Under these conditions, the matrix $(w_{jk})$ is non-negative.
It is easy to verify that
\begin{eqnarray*}
\begin{cases}
w_{jk}=2\delta_{jk},\quad \Delta w=4,\quad \Delta \Delta w=0,& |x|\leq R,\\
w_{jk}=\frac{3R}{|x|}[\delta_{jk}-\frac{x_{j}x_{k}}{|x|^2}],\quad \Delta w=\frac{3R}{|x|},\quad \Delta \Delta w=\frac{3R}{|x|^3},& |x|>2R.
\end{cases}
\end{eqnarray*}
Thus, we can divide $\frac{dM_w(t)}{dt}$ as follows:
\begin{align}\label{eq:main}
\frac{dM(t)}{dt}=&8\int_{|x|\leq R} |\nabla u|^2+a\frac{|u|^2}{|x|^2}-\frac{p}{p+2}|u|^{p+2}dx\\\label{eq:er1}
&+\int_{|x|>2R}4aR\frac{|u|^{2}}{|x|^3}dx+\int \frac{-12Rb}{(p+2)|x|}|u|^{p+2}dx+\int_{|x|>2R}\frac{12R}{|x|}|\not\!\nabla u|^2 dx\\\label{eq:er2}
&+\int_{R<|x|\leq 2R}4Re\bar{u}_{i}a_{ij}u_{j}+\mathcal{O}(\frac{R}{|x|}|u|^{p+2}+\frac{R}{|x|^3}|u|^2)dx,
\end{align}
where $\not\!\!\!\nabla$ denotes the angular derivation,
subscripts denote partial derivatives, and repeated indices are summed in this paper.

 Thus, we have
 \begin{align}\label{Mora:e}
 \int_{|x|\leq R}|\nabla u|^2+a\frac{|u|^2}{|x|^2}dx-\frac{p}{p+2}\int_{|x|\leq R}|u|^{p+2}dx \lesssim& \frac{dM(t)}{dt}+\frac{1}{R^{\alpha}}
\end{align}
where $\alpha=\min\{2,\frac{p}2\}$.

Let $\chi(x)$ be smooth function, denoted
\begin{eqnarray*}
\chi(x)=
\begin{cases}
1;\ \ |x|<\frac12\\
0;\ \ |x|>1
\end{cases}
\end{eqnarray*}
and $\chi_R(x)=\chi(\frac{x}{R})$.
\begin{lemma}[Coercivity on balls] \label{Cor}
There exists $R=R(\delta, M(u), Q)>0$ sufficiently large that
\begin{align}\label{Ball}
\sup_{t\in\R}\|\chi_{R}u\|_{L_x^2}^{1-s_p}\|\chi_{R}u\|_{\dot{H}_a^1}^{s_p}<
(1-\delta)\|Q\|_{L_x^2}^{1-s_p}\|Q\|_{\dot{H}^1}^{s_p}
\end{align}
 In particular, there exists $\delta'$ so that
 \begin{align}\label{Bianfen}
 \int|\nabla(\chi_{R}u)|^2dx-\frac{p}{p+2}\int|\chi_Ru|^{p+2}dx\geq\delta'\int|u|^{p+2}dx
 \end{align}
\end{lemma}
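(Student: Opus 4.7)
The plan is to transfer the subthreshold bound of Lemma~\ref{V-C}(i) from $u(t)$ to the truncation $\chi_R u(t)$ at a cost that vanishes as $R\to\infty$, and then convert that into coercivity via the sharp Gagliardo--Nirenberg inequality.

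First I would control $\|\chi_R u\|_{\dot H_a^1}$ in terms of $\|u\|_{\dot H_a^1}$. Writing $\nabla(\chi_R u)=\chi_R\nabla u+u\nabla\chi_R$ and integrating the cross term by parts via $\chi_R\nabla\chi_R=\tfrac{1}{2}\nabla(\chi_R^2)$ and $\mathrm{Re}(\bar u\nabla u)=\tfrac{1}{2}\nabla|u|^2$ yields
$$\|\chi_R u\|_{\dot H_a^1}^2 \;=\; \int \chi_R^2\!\left(|\nabla u|^2+a\tfrac{|u|^2}{|x|^2}\right)dx\;-\;\int \chi_R\,\Delta\chi_R\,|u|^2\,dx.$$
Since $|\chi_R\Delta\chi_R|\lesssim R^{-2}$ and mass is conserved, the remainder is $O(R^{-2}M(u_0))$. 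Combined with $\|\chi_R u\|_{L^2}\leq\|u\|_{L^2}$ and the elementary subadditive bound $(A+B)^{s_p/2}\leq A^{s_p/2}+B^{s_p/2}$ (valid for $s_p\in(0,1)$), this gives
$$\|\chi_R u\|_{L^2}^{1-s_p}\|\chi_R u\|_{\dot H_a^1}^{s_p}\;\leq\;\|u\|_{L^2}^{1-s_p}\|u\|_{\dot H_a^1}^{s_p}+C\,R^{-s_p},$$
with $C$ depending only on $M(u_0)$. Lemma~\ref{V-C}(i) supplies a uniform-in-$t$ strict subthreshold bound on the first term on the right, so choosing $R=R(\delta,M(u_0),Q_a)$ large enough absorbs the remainder and produces \eqref{Ball}.

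Next I would upgrade this to the coercivity statement. Since $\chi_R$ and $u$ are radial, so is $\chi_R u$, and the sharp radial Gagliardo--Nirenberg inequality
$$\|\chi_R u\|_{L^{p+2}}^{p+2}\;\leq\;C_a\,\|\chi_R u\|_{L^2}^{2}\|\chi_R u\|_{\dot H_a^1}^{p}$$
applies. Raising \eqref{Ball} to the $p$-th power extracts the scaling-invariant combination $\|\chi_R u\|_{L^2}^2\|\chi_R u\|_{\dot H_a^1}^{p-2}$, and the Pohozaev/Nehari identities attached to $-\mathcal{L}_aQ_a-Q_a+Q_a^{p+1}=0$ normalize the prefactor so that $\tfrac{pC_a}{p+2}\|Q_a\|_{L^2}^2\|Q_a\|_{\dot H_a^1}^{p-2}=1$. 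Consequently
$$\|\chi_R u\|_{\dot H_a^1}^2-\tfrac{p}{p+2}\|\chi_R u\|_{L^{p+2}}^{p+2}\;\geq\;\bigl[1-(1-\delta)^p\bigr]\,\|\chi_R u\|_{\dot H_a^1}^2,$$
and \eqref{Bianfen} follows by invoking the radial equivalence $\|\cdot\|_{\dot H_A^1}\cong\|\cdot\|_{\dot H_a^1}$ on the left and using radial Sobolev decay to replace $\int|\chi_R u|^{p+2}$ by $\int|u|^{p+2}$ on the right with only $o(1)$ loss as $R\to\infty$.

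The main obstacle is preserving uniformity in $t$ throughout Step~1, particularly on time slices where $\|u(t)\|_{\dot H_a^1}$ is small, since a purely multiplicative expansion would then suffer from an unbounded relative error. The subadditive inequality above is what circumvents this issue and reduces the matter to the uniform-in-$t$ $H_a^1$ boundedness of $u$, which itself is guaranteed by Lemma~\ref{V-C} together with mass conservation.
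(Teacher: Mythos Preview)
Your approach is the same as the paper's: the integration-by-parts identity yields $\|\chi_R u\|_{\dot H_a^1}^2 \leq \|u\|_{\dot H_a^1}^2 + O(R^{-2}M(u_0))$, from which \eqref{Ball} follows for $R$ large, and then coercivity comes from the sharp Gagliardo--Nirenberg inequality (this is exactly Lemma~\ref{V-C}(ii) applied to $\chi_R u$). The paper's proof is extremely terse, compressing the entire second half into ``Then inequality \eqref{Bianfen} follows from \eqref{Ball}''; your subadditive handling of the $R^{-2}$ remainder and the explicit Pohozaev normalization are details the paper leaves implicit.

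One caveat: the radial equivalence $\|\cdot\|_{\dot H_A^1}\cong\|\cdot\|_{\dot H_a^1}$ does not do what you use it for at the end. Both of these norms carry a potential contribution, so the equivalence cannot convert $\|\chi_R u\|_{\dot H_a^1}^2$ into the plain $\int|\nabla(\chi_R u)|^2$ that appears on the left of \eqref{Bianfen}. In fact \eqref{Bianfen} as literally written (missing the term $a\int|\chi_R u|^2/|x|^2$) does not follow from \eqref{Ball}, and the paper's subsequent Morawetz computation actually feeds in the full localized $\dot H_a^1$ quantity anyway (see \eqref{Mora:e}). So your coercivity step already produces exactly the inequality that is needed; the discrepancy you are trying to bridge is a typo in the stated lemma, not a gap in your argument.
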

\begin{proof}
We will make use the following identity, which can be checked by direct computation:
\begin{align}
  \int\chi_R^2|\nabla u|^2=\int|\nabla(\chi_Ru)|^2+\chi_R\Delta(\chi_R)|u|^2dx
\end{align}
In particular, we have
$$\|\chi_Ru\|_{\dot{H}_a^1}^2\lesssim \|u\|_{\dot{H}_a^1}^2+\frac{1}{R^2}$$
Taking $R$ large enough, we obtain \eqref{Ball}. Then inequality \eqref{Bianfen} follows from \eqref{Ball}.
\end{proof}

By radial Sobolev inequality, Sobolev embedding and Lemma \ref{Cor}, we get
\begin{align}\label{Mora:e}
  \int_{|x|\leq \frac{R}{2}}|u|^{p+2}dx \lesssim& \frac{dM(t)}{dt}+\frac{1}{R^{\alpha}}
\end{align}
where $\alpha=\min\{2,b+\frac{p}2\}.$

\subsection{Proof of Theorem \ref{Main}}
\noindent

By the scattering criterion and H\"older inequality, Theorem \ref{Main} follows from Morawetz estimate (i.e. Proposition \ref{pro1}).

\begin{proposition}[Morawetz estimate]\label{pro1}
  Let $d=2$, $0<b<1,\ \ 0<p<\infty$ and $u$ be a solution to the focusing
\eqref{INLS} on the space-time slab $[0,T]\times \R^d$. Then
  \begin{align}\label{Main1}
  \int_{0}^{T}\int_{\R^d}|u|^{p+2}dxdt<T^{\beta_0}
  \end{align}
  where $\beta_0=\frac{1}{1+\alpha}<1.$

  Moreover, for any $R\gg1$, such that
  \begin{align}\label{Mian2}
  \liminf_{t\to\infty}\int_{|x|<R}|u|^{2}dx=0
  \end{align}
\end{proposition}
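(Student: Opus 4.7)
\textbf{Proof plan for Proposition \ref{pro1}.}

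The strategy is to integrate the virial/Morawetz identity \eqref{Mora:e} in time, control the boundary term $M_w(T)-M_w(0)$ by a power of $R$, treat the exterior region by radial Sobolev, and then optimize $R$ as a function of $T$. First I would observe that by the construction of $w$ one has $|\nabla w(x)|\lesssim R$ uniformly in $x$, so by Cauchy--Schwarz together with the uniform $H^1_a$ bound of Lemma \ref{V-C}(i) combined with conservation of mass,
\[
\sup_{t\in\R}|M_w(t)|\;\lesssim\;R\,\|u\|_{L^\infty_tL^2_x}\|\nabla_Au\|_{L^\infty_tL^2_x}\;\lesssim\;R.
\]
Integrating \eqref{Mora:e} on $[0,T]$ then yields
\[
\int_0^T\!\int_{|x|\leq R/2}|u|^{p+2}\,dx\,dt\;\lesssim\;M_w(T)-M_w(0)+\frac{T}{R^\alpha}\;\lesssim\;R+\frac{T}{R^\alpha}.
\]

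Next I would handle the contribution from the exterior region $|x|>R/2$ using the radial Sobolev embedding $|u(x)|\lesssim|x|^{-1/2}\|u\|_{H^1_a}^{1/2}\|u\|_{L^2}^{1/2}$ for radial $H^1_a$ functions. Peeling off $p$ factors in $L^\infty$ and keeping $|u|^2$ in $L^1$ gives
\[
\int_{|x|>R/2}|u(t,x)|^{p+2}\,dx\;\lesssim\;R^{-p/2}\|u(t)\|_{H^1_a}^{p/2}\|u(t)\|_{L^2}^{p/2+2}\;\lesssim\;R^{-p/2},
\]
whence $\int_0^T\!\int_{|x|>R/2}|u|^{p+2}\lesssim T\,R^{-p/2}$. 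Combining the two pieces,
\[
\int_0^T\!\int_{\R^2}|u|^{p+2}\,dx\,dt\;\lesssim\;R+\frac{T}{R^\alpha}+\frac{T}{R^{p/2}}.
\]
Optimizing with $R=T^{1/(1+\alpha)}$ (and recalling $\alpha=\min\{2,p/2\}\leq p/2$, so the last term is absorbed) produces the claimed bound with $\beta_0=\tfrac{1}{1+\alpha}<1$, which proves \eqref{Main1}.

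For the liminf assertion \eqref{Mian2} I would argue by contradiction. Suppose, for some $R\gg 1$, $\liminf_{t\to\infty}\int_{|x|<R}|u(t,x)|^2\,dx=\eta>0$. Then there exists $T_0$ such that $\int_{|x|<R}|u(t)|^2\,dx\geq\eta/2$ for all $t>T_0$. Applying H\"older on the ball $\{|x|<R\}$ with exponents $\tfrac{p+2}{2}$ and $\tfrac{p+2}{p}$,
\[
\tfrac{\eta}{2}\;\leq\;\Big(\int_{|x|<R}|u|^{p+2}\,dx\Big)^{\frac{2}{p+2}}\big(\pi R^2\big)^{\frac{p}{p+2}},
\]
so that $\int_{|x|<R}|u(t)|^{p+2}\,dx\geq c(\eta,R)>0$ for every $t>T_0$. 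Integrating over $[T_0,T]$ would force $\int_0^T\!\int_{\R^2}|u|^{p+2}\,dx\,dt\gtrsim T$, which contradicts \eqref{Main1} as soon as $T$ is large enough (since $\beta_0<1$).

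The main obstacle, and really the only nontrivial part of the argument, is managing the error terms in the Morawetz derivative on the transition annulus $R<|x|\leq 2R$ and in the far field $|x|>2R$; these are precisely the terms collected in \eqref{eq:er1}--\eqref{eq:er2} whose control demands the radial Sobolev decay combined with the restriction $p>2$ (so that $\alpha=\min\{2,p/2\}>0$ and the resulting $R^{-\alpha}$ error is summable against the gain). Once \eqref{Mora:e} is in hand with this exponent $\alpha$, the rest is a balancing exercise between $R$ and $T$, followed by the H\"older contradiction above.
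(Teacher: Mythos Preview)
Your argument is correct and follows essentially the same route as the paper: bound $|M_w(t)|\lesssim R$, integrate \eqref{Mora:e} in time, control the exterior by radial Sobolev, and balance $R$ against $T$ via $R=T^{1/(1+\alpha)}$. Your derivation of \eqref{Mian2} by contradiction and H\"older on the ball is more explicit than what the paper writes (the paper only records the sublinear-in-$T$ bound on $\int_0^T\!\int_{|x|\leq R}|u|^{p+2}$ and declares the result, deferring the H\"older step to the sentence preceding the proposition), but the underlying idea is identical.
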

\begin{proof}
  By \eqref{Mora:e} and radial sobolev inequality, we may have
  $$\int_{\R^2}|u|^{p+2}dx \lesssim\frac{dM(t)}{dt}+\frac{1}{R^{\alpha}}$$
 Note that the uniform $H_A^1$-bounds for $u$, and the choice weight, we have
 $$\sup_{t\in\R}|M(t)|\lesssim R$$

 We now apply the fundamental theorem of calculus on an interval $[0,T]$, this yields
 $$\int_{0}^{T}\int_{\R^2} |u|^{p+2}dx \lesssim R+\frac{T}{R^{\alpha}}$$
Let $R=T^{\frac{1}{1+\alpha}},$ we get \eqref{Main1}.

By the same argument, we deduce
\begin{align*}
\int_{0}^{T}\int_{|x|\leq R}|u|^{p+2}dx\lesssim&R+\frac{T}{R}
\end{align*}
Let $R=T^{\frac{1}{1+\alpha}},$ we may have
$$\int_{0}^{T}\int_{|x|\leq R}|u|^{p+2}dx\leq T^{\frac{1}{1+\alpha}}$$
as desired. Therefore, we complete the proof of Theorem \ref{Main}.
\end{proof}

\begin{appendix}
\section{Interaction Morawetz type estimate}
\noindent

In this appendix, we will show that the global solution scatters for the defocusing case under the condition $a> \frac14$ via interaction Morawetz estimate. When $0<a<\frac14$, we don't know if the interaction Morawetz estimate is true. But for radial solution, we can remove the condition like focusing case.
\begin{proposition}\label{Morawetz}
   Let $u$ be an $H^1-$solution to
   \begin{align}\label{VNLS}
 \begin{cases}
   &i\pa_tu+\Delta u-V(x)u=|u|^{p}u,\ \ \ \  t\in\R,\ x\in\R^2\\
   &u(0,x)=u_0(x) \in H^1(\R^2)
 \end{cases}
 \end{align}
  on the spacetime slab $I\times\R^2$, and $a>\frac14$, then we have
  \begin{equation}\label{1.2}
\big\||\nabla|^{\frac12}(|u|^2)\big\|_{L^2(I;L^2(\R^2))}\leq
C\|u(t_0)\|_{L^2}^\frac32\sup_{t\in I}\|u(t)\|_{\dot
H^{1}}^\frac12,~t_0\in I,
\end{equation}
and hence
\begin{equation}\label{interac2}
\|u\|_{L_t^4(I;L_x^8(\R^2))}\leq C\|u(t_0)\|_{L^2}^\frac34\sup_{t\in
I}\|u(t)\|_{\dot H^{1}}^\frac14.
\end{equation}
\end{proposition}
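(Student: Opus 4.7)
The approach is the 2D interaction Morawetz identity of Planchon--Vega, extended to accommodate the inverse-square potential in the spirit of \cite{ZZ1}. Introduce the tensor-product Morawetz action
\begin{equation*}
\mathcal{M}(t) := 2\int_{\R^2}\!\!\int_{\R^2} |u(t,y)|^2\, \frac{x-y}{|x-y|}\cdot \mathrm{Im}(\bar u\,\nabla u)(t,x)\, dx\, dy,
\end{equation*}
for which Cauchy--Schwarz together with mass conservation gives the a priori bound $\sup_{t\in I}|\mathcal{M}(t)|\lesssim \|u(t_0)\|_{L^2}^3\sup_{t\in I}\|u(t)\|_{\dot H^1}$. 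This is already the right-hand side of \eqref{1.2}, so the task reduces to producing the square of the left-hand side when we integrate $\tfrac{d}{dt}\mathcal{M}(t)$ in time.

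Next, I would differentiate $\mathcal{M}(t)$ using \eqref{VNLS} applied to the two-point density $|u(t,x)|^2|u(t,y)|^2$. This decomposes $\tfrac{d}{dt}\mathcal{M}(t) = \mathcal{K}(t)+\mathcal{N}(t)+\mathcal{P}(t)$ into kinetic, nonlinear, and potential contributions. After integration by parts and Plancherel, $\mathcal{K}(t)$ equals $c\bigl\||\nabla|^{1/2}(|u(t)|^2)\bigr\|_{L^2_x}^2$ up to a positive constant; the relevant identity is that the bilinear form with kernel $|x-y|^{-3}$ on $\R^2$ has Fourier symbol $c|\xi|$, interpreted in the principal-value sense. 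The nonlinear term $\mathcal{N}(t)$ is nonnegative for the defocusing equation after a further integration by parts.

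The delicate step is the potential term, of schematic form
\begin{equation*}
\mathcal{P}(t) \sim \int_{\R^2}\!\!\int_{\R^2} |u(t,x)|^2|u(t,y)|^2\, \frac{a}{|x|^2}\,\frac{(x-y)\cdot x}{|x-y|\,|x|^2}\,dx\,dy.
\end{equation*}
The hypothesis $a>\tfrac14$ enters precisely here: combining the Morawetz weight $|x-y|$ with a sharp two-dimensional Hardy-type inequality (whose optimal constant is $\tfrac14$), one shows that the negative part of $\mathcal{P}(t)$ is absorbed by $\mathcal{K}(t)$. This mirrors the role of Hardy's inequality in the higher-dimensional analysis of \cite{ZZ1} and the positivity threshold $a\geq -(d-2)^2/4$ for $\mathcal{L}_a$ recorded in \cite{KMVZZ1}. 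Once this absorption is secured, integrating over $I$ and using the bound on $\mathcal{M}$ yields \eqref{1.2}.

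Finally, \eqref{interac2} follows from the 2D Sobolev embedding $\dot H^{1/2}(\R^2)\hookrightarrow L^4(\R^2)$ applied to $|u|^2$, since
\begin{equation*}
\|u\|_{L^4_tL^8_x}^4 = \bigl\||u|^2\bigr\|_{L^2_tL^4_x}^2 \lesssim \bigl\||\nabla|^{1/2}(|u|^2)\bigr\|_{L^2_{t,x}}^2,
\end{equation*}
together with \eqref{1.2}. The main obstacle is the control of $\mathcal{P}(t)$: on $\R^2$ the scalar Hardy inequality $\int|v|^2/|x|^2\lesssim\int|\nabla v|^2$ fails for general $v$, so positivity (or absorption) must be carried out at the level of the tensorized two-point quantity, and the threshold $a>\tfrac14$ appears intrinsically from the sharp 2D Hardy constant in that tensorized setting, explaining why the argument does not extend to $0<a\leq\tfrac14$ without an additional radiality assumption.
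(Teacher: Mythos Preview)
Your overall framework---the tensor-product Morawetz action with weight $|x-y|$, the a priori bound $\sup_{t}|\mathcal{M}(t)|\lesssim\|u_0\|_{L^2}^3\sup_t\|u\|_{\dot H^1}$, the nonnegativity of the kinetic remainder $R$ and of the defocusing nonlinear term, and the deduction of \eqref{interac2} from \eqref{1.2} via $\dot H^{1/2}(\R^2)\hookrightarrow L^4(\R^2)$---is exactly what the paper does. The gap is in your treatment of the potential term $\mathcal{P}(t)$.

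You propose to absorb the negative part of $\mathcal{P}(t)$ into $\mathcal{K}(t)$ using an unspecified ``tensorized Hardy inequality with sharp constant $\tfrac14$.'' No such inequality is stated, and it is unclear one exists: after crudely bounding $\big|\tfrac{x-y}{|x-y|}\big|\le 1$ one has $|\mathcal{P}(t)|\le 2a\|u(t)\|_{L^2}^2\int_{\R^2}|u(t,y)|^2|y|^{-3}\,dy$, and there is no pointwise-in-$t$ control of $\int|u|^2|y|^{-3}\,dy$ by $\big\||\nabla|^{1/2}(|u|^2)\big\|_{L^2}^2$ for $u\in H^1_a(\R^2)$. The paper does \emph{not} absorb $\mathcal{P}$ into $\mathcal{K}$. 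Instead it first proves a one-point local smoothing estimate (Lemma~\ref{lem:locsm}): differentiating the virial quantity $V(t)=\mathrm{Im}\int\bar u\,\partial_r u\,dx$ along \eqref{VNLS} yields
\[
\tfrac{d}{dt}V(t)\;\ge\;(2a-\tfrac12)\int_{\R^2}\frac{|u(t,x)|^2}{|x|^3}\,dx,
\]
so that for $a>\tfrac14$ one obtains $\int_I\!\int_{\R^2}|u|^2|x|^{-3}\,dx\,dt\lesssim(2a-\tfrac12)^{-1}\|u_0\|_{L^2}\|u\|_{L_t^\infty\dot H^1}$. This is precisely where the threshold $a>\tfrac14$ enters---through the coefficient $2a-\tfrac12$ in the one-point Morawetz identity, not through any Hardy constant at the two-point level. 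One then bounds the time integral of $|\mathcal{P}|$ by $2a\|u_0\|_{L^2}^2\int_I\!\int|u|^2|x|^{-3}\,dx\,dt\lesssim \tfrac{8a}{4a-1}\|u_0\|_{L^2}^3\sup_t\|u\|_{\dot H^1}$, which sits on the \emph{right}-hand side of the interaction Morawetz inequality alongside the boundary term, rather than being subtracted from the left. With that correction your outline becomes the paper's proof.
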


To do this, we first show the local smoothing estimate as follows.
\begin{lemma}[Local smoothing estimate]\label{lem:locsm}
Let $u:~I\times\R^2$ be an $H^1$-solution to \eqref{VNLS} with $\lambda=1$, then we
have
\begin{align}\nonumber
(2a-\frac12)\int_I\int_{\R^2}\frac{|u(t,x)|^2}{|x|^3}\;dx\;dt\\\label{equ:locas}\lesssim\|u_0\|_{L_x^2}\|u\|_{L_t^\infty(I,\dot H^1(\R^2))}.
\end{align}
\end{lemma}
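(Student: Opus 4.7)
The plan is to apply the Morawetz identity of Lemma \ref{Mor-Iden} with the weight $w(x) = |x|$, which on $\R^2$ is the natural choice that produces exactly the $|x|^{-3}$ weight appearing on the left-hand side of \eqref{equ:locas}.

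First, I would compute the relevant derivatives of $w = |x|$ on $\R^2 \setminus \{0\}$:
$$\nabla w = \frac{x}{|x|}, \qquad \partial_{jk} w = \frac{1}{|x|}\left(\delta_{jk} - \frac{x_j x_k}{|x|^2}\right), \qquad \Delta w = \frac{1}{|x|}, \qquad \Delta^2 w = \frac{1}{|x|^3}.$$
The bilaplacian formula uses the radial identity $\Delta(r^{-1}) = r^{-3}$ in two dimensions. Contracting the Hessian against $\text{Re}(\partial_k u\,\partial_j \bar u)$ produces $|\not\!\nabla u|^2/|x| \geq 0$, where $\not\!\nabla u$ denotes the angular component of the gradient.

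Substituting these expressions into the Morawetz identity --- and flipping the sign of the nonlinear term relative to Lemma \ref{Mor-Iden}, since the defocusing sign convention is $\lambda = +1$ --- yields
$$\frac{d}{dt} M_w(t) = (4a-1)\int_{\R^2}\frac{|u|^2}{|x|^3}\,dx + 4\int_{\R^2}\frac{|\not\!\nabla u|^2}{|x|}\,dx + \frac{2p}{p+2}\int_{\R^2}\frac{|u|^{p+2}}{|x|}\,dx.$$
When $a > \tfrac14$, every term on the right is nonnegative. Discarding the last two and integrating in time over $I$ gives
$$(4a-1)\int_I\!\!\int_{\R^2}\frac{|u|^2}{|x|^3}\,dx\,dt \leq 2\sup_{t\in I}|M_w(t)|.$$
Finally, since $|\nabla w| \leq 1$, Cauchy--Schwarz combined with mass conservation yields $|M_w(t)| \leq 2\|u_0\|_{L^2}\|u(t)\|_{\dot H^1}$; dividing through by $2$ turns the factor $(4a-1)$ into $(2a-\tfrac12)$ and produces exactly \eqref{equ:locas}.

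The main technical obstacle is that $w = |x|$ is singular at the origin, so Lemma \ref{Mor-Iden} does not apply directly. I would regularize using $w_\varepsilon(x) = \sqrt{|x|^2+\varepsilon^2}$, whose Hessian remains nonnegative and whose derivatives converge pointwise to those of $|x|$ away from the origin, apply the identity for each fixed $\varepsilon > 0$, and then pass to the limit $\varepsilon \to 0^+$: Fatou's lemma handles the sign-definite left-hand side, while the uniform bound $|\nabla w_\varepsilon| \leq 1$ together with the $H^1$ control absorbs the boundary Morawetz action. A standard density argument in $H^1$ justifies the formal computation at the stated level of regularity.
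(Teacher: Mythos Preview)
Your proposal is correct and follows essentially the same route as the paper. The paper computes the virial quantity $V(t)=\mathrm{Im}\int\bar u\,\partial_r u\,dx$ directly and differentiates it using the equation, which is exactly your $M_w(t)$ with $w=|x|$ up to the factor $2$; the resulting identity, the sign analysis, and the Cauchy--Schwarz bound on the boundary term are identical. Your regularization via $w_\varepsilon=\sqrt{|x|^2+\varepsilon^2}$ is an additional justification that the paper omits, simply appealing to density.
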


\begin{proof}
Define the Virial quantity
\begin{equation}
V(t):={\rm Im}\int_{\R^2}\bar{u}\partial_r u\;dx={\rm Im}\langle \partial_ru, u\rangle
\end{equation}
where $\partial_r u=\frac{x}{|x|}\cdot \nabla u$.
Deviating in $t$ and  by Leibniz rule, we obtain
\begin{align*}
\frac{d}{dt}V(t)=&{\rm Im}\int_{\R^2}\left(\bar{u}_t\partial_r
u +\bar{u}\partial_r u_t\right)dx\\
\triangleq&I_1+I_2.
\end{align*}

{\bf The contribution of $I_1$:} Using \eqref{VNLS} and
integration by parts, we get
\begin{align*}
I_1=&-{\rm Re}\int_{\R^2}\Delta\bar{u}\partial_r u\;dx+a{\rm
Re}\int_{\R^2}\frac{\bar{u}}{|x|^2}\partial_r u\;dx+{\rm Re}\int_{\R^2}|u|^{p-1}\bar{u}\partial_r u\;dx\\
=&\frac12\int_{\R^2}\frac{|\nabla
u|^2}{|x|}\;dx+\int_{\R^2}\frac{|\nabla u|^2-|\partial_r u|^2}{|x|}\;dx+\frac{a}{2}\int_{\R^2}\frac{|u|^2}{|x|^3}\;dx-\frac1{p+1}\int_{\R^2}\frac{|u|^{p+1}}{|x|}\;dx.
\end{align*}

{\bf The contribution of $I_2$:} From \eqref{VNLS} and integration
by part, we estimate
\begin{align*}
I_2=&{\rm Re}\int_{\R^2}\bar{u}\frac{x}{|x|}\cdot\nabla\Delta
u\;dx-a{\rm
Re}\int_{\R^2}\bar{u}\frac{x}{|x|}\cdot\nabla\Big(\frac{u}{|x|^2}\Big)\;dx-{\rm
Re}\int_{\R^2}\bar{u}\frac{x}{|x|}\cdot\nabla(|u|^{p-1}u)\;dx\\
=&\frac12\int_{\R^2}\frac{|\nabla
u|^2}{|x|}\;dx+\int_{\R^2}\frac{|\nabla u|^2-|\partial_r u|^2}{|x|}\;dx+\Big(\frac{3}{2}a-\frac12\Big)\int_{\R^2}\frac{|u|^2}{|x|^3}\;dx+\frac{p}{p+1}\int_{\R^2}\frac{|u|^{p+1}}{|x|}\;dx.
\end{align*}

Hence,
\begin{align*}
\frac{d}{dt}V(t)=&2\int_{\R^2}\frac{|\nabla u|^2-|\partial_r u|^2}{|x|}\;dx+(2a-\frac12)\int_{\R^2}\frac{|u|^2}{|x|^3}\;dx+\frac{p-1}{p+1}\int_{\R^2}\frac{|u|^{p+1}}{|x|}\;dx\\
\geq&(2a-\frac12)\int_{\R^2}\frac{|u|^2}{|x|^3}\;dx.
\end{align*}
Integrating on  time interval $I$ implies
\begin{align}\label{gj}
&(2a-\frac12)\int_I\int_{\R^2}\frac{|u(t,x)|^2}{|x|^3}\;dx\;dt
\leq2\sup_{t\in I }|V(t)|
\lesssim \|u_0\|_{L_x^2}\|u\|_{L_t^\infty(I,\dot H^1(\R^2))}.
\end{align}
\end{proof}
\begin{proof}[{\bf The proof of Proposition \ref{Morawetz}:}]
We consider the NLS equation in the form of
\begin{equation}\label{NLS}
i\partial_tu+\Delta u=gu
\end{equation}
where $g=g(\rho,|x|)=|u|^p+V(x)$ with $V(x)=\frac{a}{|x|^2}$.  Define the pseudo-stress energy tensors associated with Schr\"odinger equation for $j=1,2$
\begin{equation}
\begin{split}
T_{00}&=\tfrac12|u|^2,\\
T_{0j}&=\mathrm{Im}(\bar
u \pa_j u),\\
T_{jk}&=2\mathrm{Re}(\pa_j u
\overline{\pa_k u})-\tfrac12\delta_{jk}\Delta(|u|^2).
\end{split}
\end{equation}
We have by  \cite{CCL}
\begin{equation}\label{Local Conservation}
\begin{split}
\partial_t T_{00}+\partial_j T_{0j}&=0,\\
\partial_t T_{0j}+\partial_k T_{jk}&=-\rho\partial_j g.
\end{split}
\end{equation}

By the density argument, we may assume sufficient smoothness and
decay at infinity of the solutions to the calculation and in
particular to the integrations by parts. Let $h=|x|$. The starting
point is the auxiliary quantity
\begin{equation*}
J=\tfrac12\langle|u|^2, h\ast |u|^2\rangle=2\langle T_{00}, h\ast
T_{00}\rangle.
\end{equation*}
Define the quadratic Morawetz quantity $M=\tfrac14\partial_t J$.
Hence we can precisely rewrite
\begin{equation}\label{3.1}
M=-\tfrac12\langle\partial_jT_{0j}, h\ast
T_{00}\rangle-\tfrac12\langle T_{00}, h\ast
\partial_jT_{0j} \rangle=-\langle T_{00}, \partial_j h\ast
T_{0j} \rangle.
\end{equation}
By \eqref{Local Conservation} and integration by parts, we have
\begin{equation*}
\begin{split}
\partial_tM=&\langle\partial_kT_{0k}, \partial_j h\ast T_{0j} \rangle-\langle T_{00},
\partial_j h\ast\partial_t T_{0j} \rangle\\=&-\sum_{j,k=1}^n\langle T_{0k}, \partial_{jk} h\ast T_{0j} \rangle+\langle T_{00},
\partial_{jk} h\ast T_{jk} \rangle+\frac12\langle \rho,
\partial_j h\ast(\rho\partial_j g) \rangle.
\end{split}
\end{equation*}
A simple computation gives
\begin{equation}
\begin{split}
\sum_{j,k=1}^n\langle T_{0k},  \partial_{jk} h\ast T_{0j}
\rangle&=\big\langle \mathrm{Im}(\bar u\nabla u), \nabla^2 h\ast
\mathrm{Im}(\bar u\nabla u) \big\rangle\\&=\big\langle \bar u\nabla
u, \nabla^2 h\ast \bar u\nabla u \rangle-\langle \mathrm{Re}(\bar
u\nabla u), \nabla^2 h\ast \mathrm{Re}(\bar u\nabla u) \big\rangle.
\end{split}
\end{equation}
Therefore it yields that
\begin{equation*}
\begin{split}
\partial_tM=&\big\langle \mathrm{Re}(\bar u\nabla
u), \nabla^2 h\ast \mathrm{Re}(\bar u\nabla u)
\big\rangle-\big\langle \bar u\nabla u, \nabla^2 h\ast \bar u\nabla
u \big\rangle\\&+\Big\langle \bar uu,
\partial_{jk} h\ast \big(\mathrm{Re}(\pa_j u \overline{\pa_k
u})-\tfrac14\delta_{jk}\Delta(|u|^2)\big)
\Big\rangle+\frac12\big\langle \rho,
\partial_j h\ast(\rho\partial_j g) \big\rangle.
\end{split}
\end{equation*}
Note
\begin{equation*}
\begin{split}
-\big\langle \bar uu,
\partial_{jk} h\ast\delta_{jk}\Delta(|u|^2) \big\rangle=\big\langle \nabla (|u|^2), \Delta h\ast
\nabla(|u|^2) \big\rangle,
\end{split}
\end{equation*}
and $$\big\langle \mathrm{Re}(\bar u\nabla u), \nabla^2 h\ast
\mathrm{Re}(\bar u\nabla u) \big\rangle=\frac14\big\langle
\nabla(|u|^2),\nabla^2h\ast\nabla(|u|^2)\big\rangle=\frac14\big\langle
\nabla(|u|^2),\Delta h\ast\nabla(|u|^2)\big\rangle,$$
 we write
\begin{equation}\label{Morawetz equality}
\begin{split}
\partial_tM=\tfrac12\langle \nabla \rho, \Delta h\ast\nabla\rho \rangle+R+\tfrac12\big\langle \rho,
\partial_j h\ast(\rho\partial_j g) \big\rangle.
\end{split}
\end{equation}
Here $R$ is given by
\begin{equation*}\label{3.4}
\begin{split}
R&=\big\langle \bar uu, \nabla^2 h\ast (\nabla\bar u \nabla u)
\big\rangle-\big\langle \bar u\nabla u, \nabla^2 h\ast \bar u\nabla
u \big\rangle\\&=\tfrac12\int \Big(\bar u(x)\nabla \bar u(y)-\bar
u(y)\nabla\bar u(x)\Big)\nabla^2h(x-y)\Big(u(x)\nabla
u(y)-u(y)\nabla u(x)\Big)\mathrm{d}x\mathrm{d}y.
\end{split}
\end{equation*}
Since the Hessian of $h$ is positive definite, we have $$R\geq 0.$$
Integrating over time in an interval $[t_1, t_2]\subset I$ yields
\begin{equation*}
\begin{split}
\int_{t_1}^{t_2}\Big\{\langle \nabla \rho, \Delta h\ast\nabla\rho
\rangle&+\langle \rho,
\partial_j h\ast(\rho\partial_j g) \rangle\Big\}\mathrm{d}t\leq-2\langle T_{00}, \partial_j h\ast
T_{0j} \rangle\big|_{t=t_1}^{t=t_2}.
\end{split}
\end{equation*}

From now on, we choose $h(x)=|x|$. It is easy to see that
\begin{equation*}
\Big|\mathrm{Im}\int_{\R^2}\int_{\R^{2}}|u(x)|^2\frac{x-y}{|x-y|}\bar
u(y)\nabla u(y)dxdy\Big|\leq C\sup_{t\in
I}\|u(t)\|^3_{L^2}\|u(t)\|_{\dot H^{1}}.
\end{equation*}
Hence,
\begin{equation}\label{Morawetz inequality}
\int_{I}\big\langle \rho,
\partial_j h\ast(\rho\partial_j g) \big\rangle dt+\big\||\nabla|^{\frac{1}2}(|u|^2)\big\|_{L^2(I;L^2(\R^2))}^2\leq C\sup_{t\in
I}\|u(t)\|_{L^2}^3\|u(t)\|_{\dot H^{1}}.
\end{equation}

Now we consider the term
\begin{equation*}
\begin{split}
P&:=\big\langle \rho, \nabla h\ast (\rho\nabla g) \big\rangle.
\end{split}
\end{equation*}
Consider $g(\rho,|x|)=\rho^{(p-1)/2}+V(x)$ with
$V(x)=\frac{a}{|x|^2}$, then we can write $P=P_1+P_2$ where
\begin{equation}\label{3.5}
\begin{split}
P_1= \big\langle\rho, \nabla h\ast \big(\rho\nabla
(\rho^{(p-1)/2})\big)\big\rangle=\frac{p-1}{p+1}\big\langle\rho,
\Delta h\ast \rho^{(p+1)/2}\big\rangle\geq 0
\end{split}
\end{equation}
and
\begin{equation}\label{P2}
\begin{split}
P_2=& \iint\rho(x)\nabla h(x-y)\rho(y)\nabla
\big(V(y)\big)\mathrm{d}x\mathrm{d}y\\
=&2a\iint|u(x)|^2\frac{(x-y)}{|x-y|}\cdot
\frac{y}{|y|^{4}}|u(y)|^2\;dx\;dy.
\end{split}
\end{equation}
Using \eqref{equ:locas}, we get
\begin{equation*}
\begin{split}
\int_{t_1}^{t_2}|P_2(t)|\;dt\leq
2a\|u_0\|_{L_x^2}^2\int_{t_1}^{t_2}\int_{\R^2}\frac{|u(t,x)|^2}{|x|^3}\;dx\;dt
\leq\frac{8a}{4a-1}\sup_{t\in
I}\|u_0\|_{L_x^2}^3\|u(t)\|_{\dot H^{1}}.
\end{split}
\end{equation*}
This together with \eqref{3.5} and \eqref{Morawetz inequality}
yields \eqref{1.2}. The result follows.
\end{proof}
\end{appendix}

\begin{center}

\end{center}
\end{CJK*}
 \end{document}